\newtheorem{theorem}{Theorem}[section]
\newtheorem{proposition}[theorem]{Proposition}
\newtheorem{lemma}[theorem]{Lemma}
\newtheorem{corollary}[theorem]{Corollary}
\theoremstyle{definition}
\newtheorem{example}[theorem]{Example}
\newcounter{bean}
\newcommand{\Z}{\ensuremath{\mathbb{Z}}}
\newcommand{\C}{\ensuremath{\mathbb{C}}}
\newcommand{\zk}{\mathcal{Z}_K}
\newcommand{\K}{K}
\newcommand{\mP}{\mathcal{P}}
\newcommand{\mz}{\mathcal{Z}}
\newcommand{\dk}{\Delta_m^k}
\newcommand{\bbp}{\mathbb{P}}
\newcommand{\pw}{\partial^w}
\newcommand{\zm}{\mathcal{Z}_{m}}
\newcommand{\lk}{\mathrm{Link}}
\newcommand{\rt}{\mathrm{Rest}}
\newcommand{\st}{\mathrm{Star}}
\newcommand{\zl}{\mathcal{Z}_{L}}
\newcommand\scalemath[2]{\scalebox{#1}{\mbox{\ensuremath{\displaystyle #2}}}}
\renewcommand{\DJ}{\text{Davis-Januszkiewicz}}
\newcommand{\catk}{\mathrm{CAT}(K)}
\newcommand{\ima}{\mathrm{Im}\, }
\newcommand{\hp}{\text{homotopy commutative}}
\newcommand{\bx}{\mathbf{x}}
\newcommand{\colim}{\mathrm{colim}\, }
\newcommand{\hocolim}{\mathrm{hocolim}\, }
\newcommand{\catkd}{\text{$\catk$-diagram}}
\newcommand{\uhocolim}{\underset{\sigma\in K}\hocolim}
\title{Homotopy types of partial quotients for a certain case}
\author{Xin Fu} 
\address{department of mathematics, western university, 
london, ontario, n6a 5b7, canada 
} 
\email{xfu82@uwo.ca} 
\subjclass[2010]{55P15, 55R05}
\begin{document}

\begin{abstract}
In this paper, we determine the homotopy type of the quotient space $\mz_{\Delta^k_m}/S^1_d$, given by the moment-angle complex $\mz_{\Delta_m^k}$ under the diagonal circle action.
\end{abstract}

\maketitle


\section{Introduction}

For a simplicial complex $K$ on $[m]=\{1, 2,\ldots,m\}$, a {\it moment-angle complex} $\zk$ is defined by a union of product spaces, i.e., $\zk=\underset{\sigma\in K}\bigcup (D^2,S^1)^{\sigma}$, where $(D^2,S^1)^{\sigma}$ denotes $Y_1\times \ldots\times Y_m$ for which $Y_i=S^1$ if $i\notin \sigma$ and otherwise $Y_i=D^2$. Hence, by definition, a moment-angle complex has a natural coodinatewise $T^m$-action.  The {\it partial quotient}  is the quotient space $\zk/H$, where $H$ is a subtorus (a subgroup isomorphic to a torus). 

The cohomology of partial quotients $\zk/H$ is  identified with an appropriate Tor-algebra due to Panov~\cite{P}.  In addition, Franz~\cite{Franz} introduced the twisted product of Koszul complex whose cohomology algebraically isomorphic to $H^*(\zk/H)$  and also showed that the  cup product of  partial quotients differs with the standard multiplication on the Tor-algebra in general but they are isomorphic provided if $2$ is invertible in the coefficient.

Besides, the homotopy theoretical applications of moment-angle complexes are beautiful. Bahri-Bendersky-Cohen-Gitler~\cite{BBCG} showed that the suspension of a moment-angle complex splits into    a wedge of suspensions of the geometrical realisations of full subcomplexes. Porter~\cite{PORTER} and Grbi\'c-Theriault~\cite{GT2, GT1} proved that the homotopy type of moment-angle complexes for shifted complexes is a wedge of spheres. In particular, the $k$-skeleton $\Delta_m^k$ of an $(m-1)$-simplex is a simplicial complex consisting of all subsets of $[m]$ with cardinality at most $k+1$. It is a typical example of shifted complexes and there is a homotopy equivalence $\mz_{\Delta_m^k}\simeq 
\underset{j=k+2}{\overset{m}\bigvee}  \binom{m}{j} \binom {j-1} {k+1}S^{k+j+1}$ (see~\cite[Corollary 9.5]{GT2}). We adapt these ideas to  study  the partial quotient  $\mz_{\Delta_m^k}/S^1_d$  and prove the following statement.

\

{\noindent \bf Theorem~} For $0\leq k\leq m-2$, there is a homotopy equivalence
\[
\mz_{\Delta^k_{m}}/S_d^1\simeq 
\C P^{k+1}\vee\mz_{\Delta^k_{m-1}}
\vee
 (\underset{i=1}{\overset{k}\vee}S^{2i-1}*\mz_{\Delta_{m-i-1}^{k-i}})\vee
 (S^{2k+1}*T^{m-k-1}).
\]

Note that if $k=m-2$, then by definition, the quotient space $\mz_{\Delta^{m-2}_{m}}/S_d^1$ is $\C P^{m-1}$. The content of Section~\ref{S:pre} provides key lemmas  for proceeding the proof of the main result in Section~\ref{S:proof}.

\

{\it Acknowledgement.}~ I would like to express my  gratitude to Prof. Jelena Grbi\'{c} for her patient PhD supervision in many perspectives.
I also thanks to PhDs  for a lot of mathematical discussions in Southampton.



\section{Preliminaries}
\label{S:pre}

Let $K$ be a simplicial complex on $[m]$. We always assume that $\emptyset\in K$. Let $\catk$ be its face category whose objects are faces of $K$ and morphisms are face inclusions. A $\catkd$~$F$ of CW-complexes  is a functor from $\catk$ to $CW_*$, where $CW_*$ denotes the category of based, connected CW-complexes.

 We describe a construction of homotopy colimit for a $\catkd$ $F$, following a construction of the homotopy colimit in~\cite{BBCG, WZZ}   for a diagram $\mP\longrightarrow CW_*$, where $\mP$ is a poset (partially ordered set).
A $\catkd$ $F$ is equivalent to a diagram from a poset $\bar{K}$ to $CW_*$, where $\bar{K}$ denotes the poset associated to $K$ which has elements consisting of faces of $K$, ordered by the reverse inclusion.
 Then the construction $\underset{\sigma \in K}\hocolim F(\sigma)$  relies on the order complex $\Delta(\bar{K})$, which is $\mathrm{Cone}K^{\prime}$, the cone on the  barycentric subdivision of $K$.
 We adapt the construction  in~\cite[Section 4]{BBCG} of  homotopy colimit for a diagram $\mathcal{P}\longrightarrow CW_*$ to a $\catkd$ $F$, since objects and morphisms in $\catk$ form  a poset which is exactly $\bar{K}$.

Recall that $\mathrm{Cone}\, K^{\prime}$ has a vertex set $\{\sigma\in K\}$ including the empty face.
For $\sigma \in K$, denote by $X(\sigma)$  the full subcomplex of $\mathrm{Cone}\, K^{\prime}$ on the vertex set $\{\tau\in K\mid \sigma\subseteq \tau\}$. For faces $\sigma\subseteq \tau$ of $K$, then $X(\tau)$ is a subcomplex of $X(\sigma)$ and denote by $j_{\tau,\sigma}\colon X(\tau) \longrightarrow X(\sigma)$ the simplicial inclusion. Note that $X(\emptyset)= \mathrm{Cone}\, K^{\prime}$.
With a $\catkd~F$ and a subface $\sigma$ of $\tau$,  there are two types of  related maps $\alpha$ and $\beta$ defined by
\[
\begin{array}{lccc}
&\alpha=\mathrm{id} \times F(i_{\sigma,\tau}) \colon &X(\tau) \times F(\sigma)& \longrightarrow X(\tau) \times F(\tau)\\
&\beta  =j_{\tau,\sigma} \times \mathrm{id} \colon & X(\tau) \times F(\sigma)& \longrightarrow X(\sigma)
\times F(\sigma).
\end{array}
\]

Given a $\catkd~F$ of based CW complexes, the \emph{homotopy colimit} of $F$ is 
 a disjoint union  
$\underset{\sigma\in K}\coprod X(\sigma) \times F(\sigma)$ after identifications
\begin{equation}\label{eqhocolimd}
\underset{\sigma\in K}\hocolim F = (\underset{\sigma\in K}\coprod X(\sigma) \times F(\sigma))/\sim
\end{equation}
where $(\mathbf{x}, u)\sim (\mathbf{x}^{\prime}, u^{\prime})$ whenever $\alpha(\bx,u)=\beta(\bx^{\prime}, u^{\prime})$.

Let us denote $T^{\sigma}=\{(t_1, \ldots, t_m)\in T^m\mid t_j=1~\text{if}~j\notin \sigma\}$  is a $|\sigma|$-torus for  $\sigma\subseteq [m]$.
Thus the quotient group
 $T^m/T^{\sigma}=\{(t_1,\ldots, t_m)\in t^m \mid t_j=1~\text{if}~j\in \sigma\}$ is an $(m-|\sigma|)$-torus.
 For $\sigma\subseteq \tau\subseteq [m]$, there exists a quotient map $T^m/T^{\sigma}\longrightarrow T^m/T^{\tau}$ projecting $t_j$ to $1$ if $j\in \tau$ but $j\not\in \sigma$.
 This defines a $\catkd$ $D(\sigma)=T^m/T^{\sigma}$.
We show that the moment-angle complex provides a candidate for the homotopy colimit of the $\catkd$ $D(\sigma)$.
 \begin{example}[moment-angle complex]\label{hocolimzkd} 
Consider a $\catkd~D$ defined by
$D(\sigma)=T^m/T^{\sigma}$ with quotient maps 
$T^m/T^{\sigma}\longrightarrow T^m/T^{\tau}$ for $\sigma\subseteq \tau$ of $K$.
 We describe the homotopy colimit of $D$ by~\eqref{eqhocolimd}. 
First,  for every $\sigma\in K$, we have $X(\sigma)\times F(\sigma)\subseteq X(\emptyset)\times F(\emptyset)$.
We conclude that every element $(\bx, \mathbf{u})$ from $X(\sigma)\times F(\sigma)$ is equivalent to the same element $(\mathbf{x}, \mathbf{u})$ in $X(\emptyset)\times F(\emptyset)$ by considering the two types of maps $\alpha$ and $\beta$ corresponding to $\emptyset \subseteq \sigma$. 
Thus $\uhocolim D\simeq X(\emptyset)\times F(\emptyset)/\sim$.
To describe the equivalence relation on $X(\emptyset)\times F(\emptyset)$, we rely on the transitive property of an equivalence relation.
 That is to say,  $(\bx,\mathbf{u})\sim (\bx^{\prime}, \mathbf{u}^{\prime})$ in $X(\emptyset) \times F(\emptyset)$ if and only if there exists $\sigma\in K$ and an element $(\mathbf{y}, \mathbf{v})\in X(\sigma)\times F(\sigma)$ such that $(\bx,\mathbf{u})\sim (\mathbf{y},\mathbf{v})$ and $(\mathbf{y},\mathbf{v})\sim (\bx^{\prime}, \mathbf{u}^{\prime})$.
In this way, we have $\mathbf{x}=\mathbf{y}=\bx^{\prime}$ and  $\mathbf{u}_j=\mathbf{u}^{\prime}_j$ for $j\not \in \sigma$, where $u_j$ and $u_j^{\prime}$ are the $j$-th coordinate of $\mathbf{u}$ and $\mathbf{u}^{\prime}$ respectively. Note that $u_j=u^{\prime}_j$ for $j\not \in \sigma$ if and only if $\mathbf{u}^{-1}\mathbf{u}^{\prime}\in T^{\sigma}$. 
 Then, we have
\[
\uhocolim D\simeq \mathrm{Cone} K^{\prime} \times T^m/\sim
\]
where $(\mathbf{x}, \mathbf{u})\sim (\mathbf{y}, \mathbf{u}^{\prime})$ if and only if  for some $\sigma\in K$, $\mathbf{x}=\mathbf{y} \in X(\sigma)$ and $\mathbf{u}^{-1}\mathbf{u}^{\prime}\in T^{\sigma}$. Note that the space $ \mathrm{Cone} K^{\prime} \times T^m/\sim$ is $T^m$-equivariantly homeomorphic to $\zk$, where $T^m$ acts on the second coordinate (see~\cite{BP2, DJ}).
\end{example}

An analogy to this is that if $H\cap T^{\sigma}$ is trivial, then the partial quotient $\zk/H$ is a candidate of homotopy colimit for a $\catkd$ $E$ by $E(\sigma)=T^m/(T^{\sigma}\times H)$ and quotient maps.

\subsection{Fibration sequences}
We apply Puppe’s theorem~\cite{Puppe}   to get homotopy fibrations. Our exposition below follows a description due to~\cite[p.180]{Far}.

 Let $\mathcal{E}$ be a $\catk$-diagram of spaces and let $B$ be a fixed space.  By a map  $f\colon \mathcal{E}\longrightarrow B$  bewteen  $\mathcal{E}$ and $B$, we mean that $f$ is a natural transformation from $\mathcal{E}$ to $\mathrm{Top}$ with a constant evaluation $f(\sigma)=B$ for every $\sigma\in \mathcal{E}$.
 With a map $f$ from $\mathcal{E}$ to a fixed space $B$, there exists an associated  diagram of fibres by taking the objectwise homotopy fibre. To be precise,
 a $\catkd$ $\mathrm{Fib}_f$ of fibres is defined by taking $\mathrm{Fib}_f(\sigma)$ to be the homotopy fibre of $f_{\sigma}\colon \mathcal{E}(\sigma)\longrightarrow B$
and morphisms $\mathrm{Fib}_f(\sigma)\longrightarrow \mathrm{Fib}_f(\tau)$ to be the corresponding maps between fibres induced by the map $\mathcal{E}(\sigma)\longrightarrow \mathcal{E}(\tau)$ for $\sigma\subseteq \tau$ in $K$.

Given a map $f\colon  \mathcal{E} \longrightarrow B$, there are two topological spaces associated.  One is the homotopy fibre of an induced map $\bar{f}\colon \uhocolim\mathcal{E}(\sigma) \longrightarrow B$ and another one is $\uhocolim \mathrm{Fib}_f(\sigma)$, the homotopy colimit of the $\catkd$ of fibres induced by $f$. Puppe's theorem states when these two spaces have the same homotopy type.

\begin{theorem}[\cite{Far, Puppe}]\label{puppethm}
 Let $\mathcal{E}$ be a $\catk$-diagram of spaces, let $B$ be a fixed connected space and let  $f\colon \mathcal{E}\longrightarrow B$ be any map bewteen  $\mathcal{E}$ and $B$. Assume that for $\sigma\subseteq \tau$ in $\catk$, the following diagram is commutative
\[\begin{tikzcd}
\mathcal{E}(\sigma) \arrow[r]\arrow[d] & \mathcal{E}(\tau)\arrow[d]\\
B\arrow[r, equal] & B.
\end{tikzcd}  
\]
Then the homotopy fibre of the induced map $\bar{f}\colon \uhocolim \mathcal{E}(\sigma)\longrightarrow B$ is homotopy equivalent to the homotopy colimit of a $\catk$-diagram $\mathrm{Fib_f}$ of fibres.
\end{theorem}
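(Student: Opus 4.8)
The plan is to identify the homotopy fibre of $\bar f$ with a fibre product and to show that forming this fibre product commutes with the homotopy colimit construction~\eqref{eqhocolimd}; the hypothesis that the triangles $\mathcal{E}(\sigma)\to\mathcal{E}(\tau)\to B$ commute is exactly what makes this commutation valid. Fix a basepoint $b_{0}\in B$ and let $q\colon P_{b_{0}}B\to B$ be the endpoint (path) fibration, with $P_{b_{0}}B$ contractible. By definition of homotopy fibre, $\mathrm{Fib}_{f}(\sigma)\simeq\mathcal{E}(\sigma)\times_{B}P_{b_{0}}B$, the diagram maps being those induced by $\mathcal{E}(\sigma)\to\mathcal{E}(\tau)$ over $B$, and similarly the homotopy fibre of $\bar f\colon\uhocolim\mathcal{E}(\sigma)\to B$ is $\bigl(\uhocolim\mathcal{E}(\sigma)\bigr)\times_{B}P_{b_{0}}B$. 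Thus the theorem reduces to the statement
\[
\bigl(\uhocolim\mathcal{E}(\sigma)\bigr)\times_{B}P_{b_{0}}B\;\simeq\;\uhocolim\bigl(\mathcal{E}(\sigma)\times_{B}P_{b_{0}}B\bigr).
\]

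To prove this I would use the explicit description $\uhocolim\mathcal{E}(\sigma)=\bigl(\coprod_{\sigma\in K}X(\sigma)\times\mathcal{E}(\sigma)\bigr)/\!\sim$ of~\eqref{eqhocolimd}. On the summand $X(\sigma)\times\mathcal{E}(\sigma)$ the map $\bar f$ is the composite $X(\sigma)\times\mathcal{E}(\sigma)\xrightarrow{\mathrm{pr}_{2}}\mathcal{E}(\sigma)\xrightarrow{f_{\sigma}}B$; a direct check using the maps $\alpha,\beta$ attached to a pair $\sigma\subseteq\tau$ shows that $\bar f$ is well defined on the quotient precisely because $f_{\tau}\circ\mathcal{E}(i_{\sigma,\tau})=f_{\sigma}$, and that this same identity makes $\bar f$ compatible with all the gluing data. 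Consequently pulling back along $q$ distributes over the coproduct and over the factor $X(\sigma)\times(-)$, and the equivalence relation $\sim$ descends to the fibre product, yielding a natural continuous bijection between $\bigl(\uhocolim\mathcal{E}(\sigma)\bigr)\times_{B}P_{b_{0}}B$ and the space obtained from $\coprod_{\sigma}X(\sigma)\times\bigl(\mathcal{E}(\sigma)\times_{B}P_{b_{0}}B\bigr)$ by the identifications of~\eqref{eqhocolimd} for the $\catk$-diagram $\sigma\mapsto\mathcal{E}(\sigma)\times_{B}P_{b_{0}}B$ --- that is, with $\uhocolim\bigl(\mathcal{E}(\sigma)\times_{B}P_{b_{0}}B\bigr)$. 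Composing with the objectwise homotopy equivalence $\mathcal{E}(\sigma)\times_{B}P_{b_{0}}B\simeq\mathrm{Fib}_{f}(\sigma)$, which is natural in $\sigma$ and hence induces an equivalence on homotopy colimits by the homotopy-invariance of $\uhocolim$ (as in~\cite{BBCG}), completes the argument.

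The step that really requires care, and which I expect to be the main obstacle, is the assertion that the natural continuous bijection above is a homeomorphism: in general topological spaces neither products nor fibre products preserve quotient maps, so this is a statement about compactly generated weak Hausdorff spaces, or else one exploits that each $X(\sigma)$ is a finite CW complex and the gluing maps are cofibrations, so that the comparison map restricts to a homeomorphism on every summand and is therefore a homotopy equivalence between the glued spaces. A purely homotopy-theoretic alternative that sidesteps the point-set bookkeeping is to filter $\uhocolim\mathcal{E}(\sigma)$ by adjoining the summands $X(\sigma)\times\mathcal{E}(\sigma)$ in order of $\dim\sigma$, so that each stage is a homotopy pushout, and then to induct using the gluing lemma for homotopy fibres (Mather's cube theorems); this is essentially the argument of~\cite{Far, Puppe}. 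In either approach, the hypothesis that the triangles $\mathcal{E}(\sigma)\to\mathcal{E}(\tau)\to B$ commute is used at every step: it is precisely what allows the fibration data over $B$ to be transported through the homotopy colimit.
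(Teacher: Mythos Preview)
The paper does not prove this theorem; it is quoted with attribution to \cite{Far, Puppe} and used as a black box. There is therefore no ``paper's own proof'' to compare your proposal against.

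That said, your sketch is the standard route: replace homotopy fibres by strict pullbacks along the path fibration $P_{b_0}B\to B$ and then argue that pulling back along a fixed fibration commutes with the explicit homotopy colimit of~\eqref{eqhocolimd}. You are right that the only delicate point is the point-set one (products and pullbacks need not preserve quotient maps in $\mathrm{Top}$), and your two remedies---working in compactly generated spaces, or filtering by skeleta and inducting via Mather's cube lemma---are exactly the accepted fixes. The latter is closer in spirit to how \cite{Far} organises the argument, and it has the advantage that it never needs the comparison map to be a homeomorphism, only a weak equivalence at each stage of the filtration. Either way, the commutativity hypothesis $f_\tau\circ\mathcal{E}(i_{\sigma,\tau})=f_\sigma$ is, as you say, precisely what makes $\bar f$ well defined and what lets the fibrewise data pass through the gluing.
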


Puppe's theorem indicates the following lemma.

\begin{lemma}\label{fibrations}
Let $H$ be a subtorus of $T^{m}$ of rank $r$ satisfying $H\cap T^{\sigma}=\{\mathbf{1}\}$ for every $\sigma\in K$. Then the quotient map $\zk\overset{q}\longrightarrow \zk/H$ makes the following diagram of homotopy fibrations commutative up to homotopy
\[
\begin{tikzcd}
\zk \arrow[r]\arrow[d, "q"] & DJ_K \arrow[r, "j"] \arrow[d, equal] & BT^m \arrow[d, "B\pi"]\\
\zk/H \arrow[r] & DJ_K \arrow[r, "(B\pi)\circ j"] & B(T^m/H)
\end{tikzcd}
\]
where $j$ is a canonical inclusion.
%
%
\end{lemma}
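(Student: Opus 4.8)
The plan is to realize both rows of the diagram as instances of Puppe's theorem (Theorem~\ref{puppethm}) applied to a single map of $\catkd$'s, and then identify the homotopy colimits involved.

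First I would set up the relevant diagrams. Let $\mathcal{E}$ be the constant $\catkd$ with $\mathcal{E}(\sigma) = DJ_K$ for all $\sigma$ (all structure maps the identity); its homotopy colimit is then $DJ_K$ itself, since the homotopy colimit of a constant diagram over the contractible base $\mathrm{Cone}\,K'$ collapses. Take $B = BT^m$ and let $f \colon \mathcal{E} \to B$ be the natural transformation which on each object is the canonical inclusion $j \colon DJ_K \hookrightarrow BT^m$; this is a legitimate map in the sense of the excerpt because the target is constant and all the required squares commute trivially (the left-hand maps are identities). By the standard fibration $T^m \to DJ_K \to BT^m$ and functoriality of the homotopy fibre, the associated $\catkd$ of fibres $\mathrm{Fib}_f$ has $\mathrm{Fib}_f(\sigma) \simeq T^m$; tracking the induced maps between fibres one checks this diagram of fibres is exactly (up to homotopy) the $\catkd$ $D$ of Example~\ref{hocolimzkd}, namely $\sigma \mapsto T^m/T^\sigma$ — here the hypothesis $H \cap T^\sigma = \{\mathbf 1\}$ is not yet needed, but identifying the maps correctly requires care about which coordinates are killed. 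Puppe's theorem then gives that the homotopy fibre of $\bar f \colon DJ_K \to BT^m$ is $\uhocolim D \simeq \zk$, which is the top row.

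For the bottom row I would repeat the argument with $B' = B(T^m/H)$ and $f' \colon \mathcal{E} \to B'$ given on each object by the composite $(B\pi)\circ j \colon DJ_K \to BT^m \to B(T^m/H)$, where $\pi \colon T^m \to T^m/H$ is the projection. Again the squares commute trivially, so Puppe applies. The fibre of $B\pi$ is $H$, so the fibre of $(B\pi)\circ j$ over each $\sigma$ sits in a fibration with $T^m$ and $H$, and is identified with $(T^m/T^\sigma)/H$; here is where $H \cap T^\sigma = \{\mathbf 1\}$ enters, ensuring that the action of $H$ on $T^m/T^\sigma$ is free so this quotient is a manifold of the expected dimension and the resulting $\catkd$ of fibres is precisely the diagram $E(\sigma) = T^m/(T^\sigma \times H)$ mentioned at the end of Example~\ref{hocolimzkd}. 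Its homotopy colimit is $\zk/H$ by the same colimit computation as for the moment-angle complex, so Puppe gives that the homotopy fibre of $\bar{f'} = (B\pi)\circ j$ is $\zk/H$, which is the bottom row.

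Finally I would assemble the two applications into the single commutative diagram in the statement: the vertical map $B\pi \colon BT^m \to B(T^m/H)$ induces a map of the maps $f \to f'$ (compatible on the constant objects via the identity on $DJ_K$), hence a map on homotopy fibres $\zk \to \zk/H$, and one verifies this map agrees with the quotient map $q$ up to homotopy by chasing the identifications of fibres made above (both are induced by $T^m \to T^m/H$ on the fibre coordinate). Commutativity of the two squares in the statement is then automatic — the left square because both composites restrict to $q$ on the fibre and the identity on $DJ_K$, the right square because $(B\pi)\circ j$ is literally the bottom composite. The main obstacle I anticipate is not any single hard step but the bookkeeping: making the identifications $\mathrm{Fib}_f(\sigma) \simeq T^m/T^\sigma$ and $\mathrm{Fib}_{f'}(\sigma) \simeq T^m/(T^\sigma\times H)$ natural in $\sigma$, so that they genuinely assemble to the $\catkd$'s $D$ and $E$ rather than merely agreeing objectwise, and then checking that the induced map on homotopy colimits is $q$ and not some other self-map — this requires being careful with basepoints and with the precise form of the structure maps in the homotopy-colimit model~\eqref{eqhocolimd}.
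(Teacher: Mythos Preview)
Your setup has a genuine error that breaks the argument. You take $\mathcal{E}$ to be the \emph{constant} $\catkd$ with value $DJ_K$ and structure maps the identity, and $f$ the constant map $j\colon DJ_K\to BT^m$. But then the associated diagram of fibres $\mathrm{Fib}_f$ is itself constant, with value the homotopy fibre of $j$; it cannot be the varying diagram $D(\sigma)=T^m/T^\sigma$. (Indeed, your claimed ``standard fibration $T^m\to DJ_K\to BT^m$'' is not correct: the fibre of $j$ is $\zk$, not $T^m$.) With a constant diagram Puppe's theorem is a tautology and tells you nothing new; worse, for the bottom row you would need to already know that the fibre of $(B\pi)\circ j$ is $\zk/H$, which is precisely the content of the lemma, so the argument becomes circular.

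The paper's proof uses instead the non-constant $\catkd$ $DJ$ given by $\sigma\mapsto (BS^1,*)^\sigma=BT^\sigma$, with structure maps the coordinate inclusions. Its homotopy colimit is $DJ_K$, and now the objectwise fibres over $BT^m$ and $B(T^m/H)$ really are $T^m/T^\sigma$ and $T^m/(T^\sigma\times H)$ respectively (the latter using $H\cap T^\sigma=\{\mathbf{1}\}$), coming from the elementary fibre bundles $T^m/T^\sigma\to BT^\sigma\to BT^m$. Puppe then identifies the fibres of $j$ and $(B\pi)\circ j$ with $\uhocolim D$ and $\uhocolim E$, and the explicit model~\eqref{eqhocolimd} lets one recognise these as $\zk$ and $\zk/H$ and the induced map between them as the quotient $q$. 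Your overall strategy---two applications of Puppe linked by $B\pi$---is right, and your closing remarks about naturality and identifying the induced map with $q$ are on point; but the choice of $\mathcal{E}$ has to be the varying diagram $BT^\sigma$, not the constant one.
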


\begin{proof}
If $H\cap T^{\sigma}$ is trivial for every $\sigma\in K$, then we have a diagram of fibrations
\begin{equation}\label{fibrebun}
\begin{tikzcd}
T^m/T^{\sigma} \arrow[r]\arrow[d] & BT^{\sigma} \arrow[r] \arrow[d, equal]  & BT^m \arrow[d, "B\pi"]\\
T^m/(T^{\sigma}\times H) \arrow[r]  & BT^{\sigma} \arrow[r]  & B(T^m/H).
\end{tikzcd}
\end{equation}
Consider the $\DJ$ space as $DJ_K=(BS^1,*)^K\simeq \underset{\sigma\in K}{\mathrm{hocolim}} BT^{\sigma}$. 
The inclusion $j_{\sigma}\colon BT^{\sigma} \longrightarrow BT^m$ and its composition with the quotient map $\pi j_{\sigma}\colon BT^{\sigma}\longrightarrow B(T^m/H)$, give two maps from a $\catkd~ DJ$ (by sending $\sigma\in K$ to $(BS^1, *)^{\sigma}$) $BT^m$ and $B(T^m/H)$, respectively.  By the fibre bundles (\ref{fibrebun}), the $\catk$-diagrams $D$ with $D(\sigma)=T^m/T^{\sigma}$ and morphisms are projections, and $E$  with $E(\sigma)=T^m/(T^{\sigma}\times H)$ and morphisms are projections, are the induced  $\catk$-diagrams of fibres for $(BS^1, *)^K \overset{j}\longrightarrow BT^m$ and $(BS^1, *)^K\overset{(B\pi)\circ i}\longrightarrow B(T^m/H)$, respectively. Objectwise, the quotient map $D(\sigma)\longrightarrow E(\sigma)$ is the induced map between fibres.

Note that these two maps $j$ and $(B\pi) \circ j$ satisfy the condition in Puppe's theorem. 
A direct consequence of Puppe's theorem is that 
$\underset{\sigma \in K}\hocolim D(\sigma)$ and $\underset{\sigma \in K}\hocolim E(\sigma)$ are the homotopy fibres of maps $DJ_K \overset{j}\longrightarrow BT^m$ and $DJ_K \overset{(B\pi)\circ j}\longrightarrow B(T^m/H)$, respectively.  
  According to the construction~\eqref{eqhocolimd} of  the homotopy colimit, the objectwise quotient map $D(\sigma)\longrightarrow E(\sigma)$ will induce a quotient map between $X(\emptyset)\times D(\sigma)/\sim$ and $X(\emptyset)\times E(\sigma)/\sim$.
   These candidates~\eqref{eqhocolimd} of the homotopy colimit of $D$ and $E$  are homeomorphic to $\zk$ and $\zk/H$.  When we replace $X(\emptyset)\times D(\sigma)/\sim$ and $X(\emptyset)\times E(\sigma)/\sim$ by $\zk$ and $\zk/H$ due to the homeomorphism, the quotient map between $X(\emptyset)\times D(\sigma)/\sim$ and $X(\emptyset)\times E(\sigma)/\sim$ induces the quotient map between $\zk$ and $\zk/H$,  since  $X(\emptyset)\times D(\sigma)/\sim$ and $\zk$ are $H$-equivariantly homeomorphic.
\end{proof}

\begin{remark}
It can be shown that if $K$ does not have ghost vertices, then these two fibration sequences in Lemma~\ref{fibrations} splits after loop because of the existence of sections in both cases.  The  long exact sequence of homotopy groups associated to $\zk/H\longrightarrow DJ_K\longrightarrow B(T^m/H)$ implies that $\zk/H$ is simply-connected.
The condition that $H\cap T^{\sigma}$ is trivial for every $\sigma\in K$  is equivalent to that $H$ acts freely on $\zk$.
\end{remark}

\subsection{Homotopy pushouts of fibres.}
Here we rely on Mather's Cube Lemma~\cite{Mather} to obtain a homotopy pushout among fibres.

\begin{lemma}[Cube Lemma \cite{Mather, Martin}]\label{Mather}
Consider a cube diagram whose faces are homotopy commutative.
\[
\scalemath{0.85}{\xymatrix{
& A^{\prime} \ar[ld] \ar[rr] \ar'[d][dd]  & & B^{\prime}  \ar[dd]\ar[ld]
\\
C^{\prime} \ar[rr]\ar[dd]
& & D^{\prime} \ar[dd]
\\
&  A \ar[ld] \ar'[r][rr]
& & B \ar[ld]
\\
C\ar[rr]
& & D
}}
 \]
If the bottom square $A-B-C-D$ is a homotopy pushout and all four sided square are homotopy pullbacks, then the top square $A^{\prime}-B^{\prime}-C^{\prime}-D^{\prime}$ is also a homotopy pushout.
\end{lemma}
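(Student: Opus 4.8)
The strategy is to build the partial quotient $\mz_{\Delta^k_m}/S^1_d$ as a homotopy colimit and then split it up using the cofibration structure of the underlying simplicial complex $\Delta_m^k$. Throughout write $K=\Delta_m^k$. Since $S^1_d$ is the diagonal circle, $S^1_d\cap T^\sigma=\{\mathbf 1\}$ for every $\sigma\in K$ (any nonempty $\sigma$ forces a coordinate of a diagonal element to be $1$, hence all are), so Lemma~\ref{fibrations} applies: $\mz_K/S^1_d$ is the homotopy fibre of $DJ_K\xrightarrow{(B\pi)\circ j}B(T^m/S^1_d)$, and by Theorem~\ref{puppethm} it is the homotopy colimit of the $\catkd$ $E(\sigma)=T^m/(T^\sigma\times S^1_d)$.

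First I would analyse the low-dimensional pieces. The full subcomplex of $K=\Delta_m^k$ on the last coordinate being absent is $\Delta_{m-1}^k$, and the link/restriction decomposition of $\Delta_m^k$ along the vertex $m$ realises $\mz_K/S^1_d$ as glued from the partial quotient over $\Delta_{m-1}^k$ together with the contribution of faces containing $m$. More precisely, using $X(\emptyset)=\mathrm{Cone}\,K'$ and the presentation~\eqref{eqhocolimd}, I would split the index category $\catk$ into the subposet of faces not containing $m$ (giving $\mz_{\Delta_{m-1}^k}/S^1_d$ up to the reparametrisation of the circle) and the coned-off part indexed by faces containing $m$; the intersection is the homotopy colimit over the link of $m$, which is $\Delta_{m-1}^{k-1}$. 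This is where the $\C P^{k+1}$ summand enters: over the subcomplex $\Delta_k^k=\Delta^k$ (the faces of a $k$-simplex), $\mz_{\Delta^k}=S^{2k+1}$ and $S^{2k+1}/S^1_d=\C P^k$; pushing this out one dimension via the cone on the new vertex produces $\C P^{k+1}$, exactly as in the degenerate case $k=m-2$ noted after the statement. I would make this precise with the Cube Lemma (Lemma~\ref{Mather}): put the homotopy pushout defining $\mz_K$ from $\mz_{\Delta_{m-1}^k}$, $X(\mathrm{star})$ and $\mz_{\Delta_{m-1}^{k-1}}$ on the bottom face, the fibrewise $S^1_d$-quotients above, check the side squares are homotopy pullbacks (they are, being pullbacks of the fibration $DJ_K\to B(T^m/S^1_d)$ restricted to subcomplexes), and conclude the top square — the one for $\mz_K/S^1_d$ — is a homotopy pushout.

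Next I would iterate. The "new" part over faces containing the vertex $m$ is, by the same link argument, a suspension-type construction: removing vertex $m$ from a face of $\Delta_m^k$ containing it lands in $\Delta_{m-1}^{k-1}$, and the cone direction contributes the half-smash with the quotient torus, so after one step the new summand beyond $\C P^{k+1}$ and $\mz_{\Delta_{m-1}^k}/S^1_d$ is a join $S^1*\big(\text{partial quotient over }\Delta_{m-2}^{k-1}\big)$-type term. Running the induction on $k$ — at stage $i$ the relevant subcomplex is $\Delta_{m-i-1}^{k-i}$, the torus factor that has been split off is $T^{i}$-worth, and the accumulated sphere is $S^{2i-1}$ — produces the terms $S^{2i-1}*\mz_{\Delta_{m-i-1}^{k-i}}$ for $i=1,\dots,k$. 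The recursion bottoms out at $i=k$, where $\Delta_{m-k-1}^{0}$ is a discrete set of $m-k-1$ points, $\mz_{\Delta_{m-k-1}^0}\simeq\bigvee S^1$ assembles (together with the accumulated join structure) into the join $S^{2k+1}*T^{m-k-1}$; here I would use that $\mz_{\Delta_n^0}$ is a wedge of circles and that the relevant quotient of the full torus by the diagonal circle, after suspension/join, reorganises into $T^{m-k-1}$ joined with $S^{2k+1}$. Base case $k=0$: $\mz_{\Delta_m^0}/S^1_d$ should reduce to $\C P^1\vee\mz_{\Delta_{m-1}^0}\vee(S^1*T^{m-1})$, which I would verify directly from $\mz_{\Delta_m^0}\simeq\bigvee_{j=2}^m(j-1)\binom{m}{j}S^{j+1}$ and the fibration of the Remark.

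\textbf{Main obstacle.} The serious difficulty is not producing the individual pieces but proving that the iterated homotopy pushouts actually \emph{split as a wedge} — i.e. that each gluing map in the tower is null-homotopic after the relevant suspension. For moment-angle complexes of shifted complexes this follows from the Grbi\'c–Theriault machinery, but for the partial quotient the attaching maps involve the circle action and the $\C P^{k+1}$ summand is not a suspension, so one cannot simply suspend everything. I expect to handle this by showing the fibration $\mz_K/S^1_d\to DJ_K\to B(T^m/S^1_d)$ has a section after looping (as indicated in the Remark, since $\Delta_m^k$ has no ghost vertices), which forces the connecting maps in the associated Mather cubes to be trivial on the nose in the range that matters, and by treating the $\C P^{k+1}$ factor separately as the quotient of the genuinely embedded $S^{2k+1}=\mz_{\Delta^k}\subset\mz_{\Delta^k_m}$, whose normal gluing data is controlled by the cofibration $\Delta^k\hookrightarrow\Delta_m^k$. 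Getting the bookkeeping of which torus coordinates survive at each stage exactly right — so that the exponents $2i-1$ and the final $T^{m-k-1}$ come out correctly — is the part that will require the most care.
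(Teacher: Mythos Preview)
Your proposal does not address the statement in question. The statement is the Cube Lemma (Lemma~\ref{Mather}), a classical result of Mather which the paper merely \emph{cites} and does not prove. What you have written is instead a sketch of a strategy for the main Theorem of the paper, namely the identification of the homotopy type of $\mz_{\Delta^k_m}/S^1_d$. These are entirely different statements: the Cube Lemma is a general fact about homotopy pushouts and pullbacks in a cube diagram, with no mention of moment-angle complexes, circle actions, or simplicial complexes.

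If the task is genuinely to prove Lemma~\ref{Mather}, then nothing in your proposal is relevant; a proof would proceed by comparing the homotopy pushout of the top face with $D'$ via the universal property, using that the side faces are homotopy pullbacks to identify the homotopy fibres over each point of $D$, and invoking Mather's original argument or a model-categorical left-properness argument. If, on the other hand, you intended to prove the main Theorem, you should say so explicitly; in that case your outline is in the right spirit but is considerably vaguer than the paper's actual argument, which proceeds through the auxiliary simplicial complexes $L^k_{j,m}$ and the explicit identification of cofibres $C_{k,m}$ in Theorem~\ref{ckm} and Proposition~\ref{hmpjkm}, rather than through a direct inductive splitting as you suggest. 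Your treatment of the ``main obstacle'' --- showing the attaching maps are null-homotopic --- is handled in the paper not by a section argument but by Lemma~\ref{nullJ}, which factors the relevant inclusion through a contractible space.
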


Given a map $D\longrightarrow Z$, there is a commutative diagram 
\[
\begin{tikzcd}
A \arrow[r]\arrow[d] &B \arrow[d] \arrow[ddr, bend left] &\\
C\arrow[r]\arrow[drr, bend right] & D \arrow[dr]& \\
 & & Z.
\end{tikzcd}
\]

A  special case of cube lemma observes that the top square $A^{\prime}-B^{\prime}-C^{\prime}-D^{\prime}$ is obtained by taking the homotopy fibre, respectively, through mapping each $A, B, C, D$ into a fixed space $Z$ given a map $D\longrightarrow Z$. So that, if $A-B-C-D$ is a homotopy pushout, then the square of fibres on the top $A^{\prime}-B^{\prime}-C^{\prime}-D^{\prime}$ is a homotopy pushout too.

In particular, a pushout $K_1\longleftarrow K_1\cap K_2\longrightarrow K_2$ of simplicial complexes gives rise to  a  pushout $(BS^1,*)^{\overline{K}_2}\longleftarrow (BS^1,*)^{\overline{K_1\cap K_2}}\longrightarrow (BS^1,*)^{\overline{K}_1}$ of Davis-Januszkiewicz  spaces, where $(BS^1, *)^{\overline{K}}$ denotes the polyhedral product allowing the ghost vertices, considering the corresponding simplicial complex as a subcomplex of $K_1\cup K_2$.
Since $(BS^1, *)$ is a pair of CW complexes, the maps between $\DJ$ spaces induced by simplicial inclusions   are cofibrations. So  this pushout in terms of $\DJ$ spaces is also a homotopy pushout.
 Mapping $(BS^1, *)^K$ to $BT^m$ and $B(T^m/H)$ as in Lemma~\ref{fibrations}, we  have the homotopy fibres $\zk$ and $\zk/H$. Hence by Lemma~\ref{Mather}, there are two homotopy pushouts in terms of moment-angle complexes $\zk$ and their quotients $\zk/H$ and the maps among them are induced by simplicial inclusions.
 
 If $K_1$ is a subcomplex of $K$,  denote by $\mz_{\overline{K}_1}$ the moment-angle complex  allowing ghost vertices on the vertex set of $K$.
 For two based spaces $X$ and $Y$, the {\it half-smash product} is $X\ltimes Y\simeq X\times Y/X\times *$ and the {\it join} is $X*Y\simeq  \Sigma X\wedge Y$.
 Under the assumption of Lemma~\ref{fibrations}, the next statement follows.
 
 \begin{lemma} \label{cubeq}
 Let $K=K_1\cup K_2$ on $[m]$. Suppose that $H$ is a subtorus of $T^m$ such that $H\cap T^{\sigma}=\{\mathbf{1}\}$ for any $\sigma\in K$.  There is a commutative cube diagram
  \[
\scalemath{0.75}{\xymatrix{
& \mz_{\overline{K_1\cap K_2}} \ar[ld] \ar[rr] \ar'[d][dd] 
 & & \mz_{\overline{K}_2}  \ar[dd]\ar[ld]
\\
\mz_{\overline{K}_1} \ar[rr]\ar[dd]
& & \zk \ar[dd]
\\
&  \mz_{\overline{K_1\cap K_2}}/H \ar[ld] \ar'[r][rr]
& & \mz_{\overline{K}_2}/H
 \ar[ld]
\\
\mz_{\overline{K}_1}/H
\ar[rr]
& & \zk/H
}}
 \]
where the top and bottom are homotopy pushouts, whose maps are induced by simplicial inclusions and all vertical maps are quotient maps. 
 \end{lemma}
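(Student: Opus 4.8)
The plan is to exhibit the top and bottom faces of the cube as squares of homotopy fibres of a single pushout of Davis--Januszkiewicz spaces, taking one family of fibres over $BT^m$ and the other over $B(T^m/H)$, and then to glue the two resulting homotopy pushouts along the map $B\pi\colon BT^m\to B(T^m/H)$.

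First I would fix the underlying pushout. The square $K_1\hookleftarrow K_1\cap K_2\hookrightarrow K_2$ has colimit $K=K_1\cup K_2$, so applying the polyhedral product $(BS^1,*)^{(-)}$ with all four complexes regarded on the common vertex set $[m]$ (ghost vertices allowed) gives a commutative square
\[\begin{tikzcd}
(BS^1,*)^{\overline{K_1\cap K_2}} \arrow[r]\arrow[d] & (BS^1,*)^{\overline{K}_2}\arrow[d]\\
(BS^1,*)^{\overline{K}_1} \arrow[r] & DJ_K,
\end{tikzcd}\]
whose maps, being induced by simplicial inclusions, are cofibrations; hence the square is a homotopy pushout. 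Next, for each $L\in\{K_1\cap K_2,\,K_1,\,K_2,\,K\}$ I would apply Lemma~\ref{fibrations} to $\overline{L}$ on $[m]$: every face of $\overline{L}$ is a face of $K$, so the condition $H\cap T^\sigma=\{\mathbf{1}\}$ still holds, and the lemma yields a commutative ladder of homotopy fibrations
\[\begin{tikzcd}
\mz_{\overline{L}} \arrow[r]\arrow[d, "q_L"] & (BS^1,*)^{\overline{L}} \arrow[r]\arrow[d, equal] & BT^m \arrow[d, "B\pi"]\\
\mz_{\overline{L}}/H \arrow[r] & (BS^1,*)^{\overline{L}} \arrow[r] & B(T^m/H)
\end{tikzcd}\]
with $q_L$ the quotient map. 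This is natural in $L$: a simplicial inclusion $L\subseteq L'$ induces the $T^m$-equivariant subspace inclusion $\mz_{\overline{L}}\hookrightarrow\mz_{\overline{L'}}$, hence $\mz_{\overline{L}}/H\hookrightarrow\mz_{\overline{L'}}/H$, compatibly with the $q_L$ and with the ladders, since every space and map in Lemma~\ref{fibrations} is functorial in the underlying complex.

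Now I would invoke the homotopy-fibre form of Mather's cube lemma recorded after Lemma~\ref{Mather}. Composing the corner map $DJ_K\xrightarrow{j}BT^m$ with the three structure maps of the pushout square and passing to objectwise homotopy fibres yields a homotopy pushout; the top rows of the ladders identify its vertices with $\mz_{\overline{K_1\cap K_2}},\mz_{\overline{K}_1},\mz_{\overline{K}_2}$ and its corner with $\zk$, and its maps with the inclusions above, so this is the top face of the cube. Replacing $j$ by $(B\pi)\circ j$ and repeating, the bottom rows identify the resulting homotopy pushout with the bottom face, vertices $\mz_{\overline{K_1\cap K_2}}/H,\mz_{\overline{K}_1}/H,\mz_{\overline{K}_2}/H$ and corner $\zk/H$. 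Finally $B\pi$ carries the first family of maps into a fixed space to the second, so it induces a map from the first square of homotopy fibres to the second; objectwise, by the ladders, this map is the quotient map $q_L$, and these are the vertical arrows of the cube. Each side square is the image under $q_{(-)}$ of a simplicial inclusion, so it commutes on the nose, and the whole cube is homotopy commutative with its top and bottom homotopy pushouts, as required.

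The step I expect to demand the most care is not the cube lemma but the bookkeeping that fuses the two homotopy pushouts into one cube: one must check that Lemma~\ref{fibrations} is genuinely natural in the simplicial complex, so that the homotopy fibres over $BT^m$ and over $B(T^m/H)$ assemble compatibly and the connecting map between them is precisely the quotient projection, not merely an abstract equivalence of fibres. This naturality is built into the polyhedral-product and Puppe's-theorem arguments underlying Lemma~\ref{fibrations}, so once it is spelled out no further computation is needed.
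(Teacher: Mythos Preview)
Your argument is correct and follows exactly the route the paper intends: the paper's proof is the single line ``This is a consequence of Cube Lemma and Lemma~\ref{fibrations},'' with the pushout of Davis--Januszkiewicz spaces and the passage to fibres over $BT^m$ and $B(T^m/H)$ spelled out in the paragraph preceding the lemma. Your write-up simply unpacks that sentence, including the naturality check needed to identify the vertical maps with the quotient projections.
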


\begin{proof}
 The is a consequence of Cube Lemma and Lemma~\ref{fibrations}.
\end{proof}

\begin{example} \label{pre}
Let $K$ be the following simplicial complex with $K_1$ and $K_2$ pictured below. Consider the diagonal $S^1$-action on $\zk$.
\begin{center}
\begin{tikzpicture}[scale=0.5]
\tikzstyle{point}=[circle,draw=black,fill=black,inner sep=0pt,minimum width=1.5pt,minimum height=0pt] 
\node (5)[label=below: \small{$K$}] at(0, -1.5){};
\node (1)[point, label=left:\tiny{$1$}] at (0,1) {}; 
\node (2)[point, label=left:\tiny{$2$}] at (-1,-0.5) {}; 
\node (3)[point, label=right:\tiny{$3$}] at (1,-0.5) {};
\node (4)[point, label=below:\tiny{$4$}] at (0,-1.2) {};
\draw (1) -- (2) -- (4) -- (3) -- (1)--(4);
\end{tikzpicture}~~~~~~
 \begin{tikzpicture}[scale=0.5]
\tikzstyle{point}=[circle,draw=black,fill=black,inner sep=0pt,minimum width=1.5pt,minimum height=0pt] 
\node (5)[label=below: \small{$K_1$}] at(0, -1.5){};
\node (1)[point, label=left:\tiny{$1$}] at (0,1) {}; 
\node (2)[point, label=left:\tiny{$2$}] at (-1,-0.5) {}; 
\node (4)[point, label=below:\tiny{$4$}] at (0,-1.2) {};
\draw (1) -- (2) -- (4) --(1);
\end{tikzpicture}~~~~~~
\begin{tikzpicture}[scale=0.5]
\tikzstyle{point}=[circle,draw=black,fill=black,inner sep=0pt,minimum width=1.5pt,minimum height=0pt] 
\node (5)[label=below: \small{$K_2$}] at(0, -1.5){};
\node (1)[point, label=left:\tiny{$1$}] at (0,1) {}; 
\node (3)[point, label=right:\tiny{$3$}] at (1,-0.5) {};
\node (4)[point, label=below:\tiny{$4$}] at (0,-1.2) {};
\draw (1)  -- (4) -- (3) -- (1);
\end{tikzpicture}
\end{center}
In this case, we have the following  spaces (up to homotopy)
\[
\scalemath{1}{
\mz_{\overline{K_1\cap K_2}}\simeq S^1\times S^1,~\mz_{\overline{K_1\cap K_2}}/S^1_d\simeq S^1,~
\mz_{\overline{K}_i}\simeq S^1\times S^5,~\mz_{\overline{K}_i}/S^1_d\simeq S^5, i=1,2.
}
\]

The diagram in Lemma~\ref{cubeq} indicates a homotopy commutative diagram by a replacement of spaces due to homotopy equivalences
 \[
\scalemath{0.7}{\xymatrix{
& S^1\times S^1 \ar[ld]_{*\times \mathrm{id}} \ar[rr]^{\mathrm{id}\times *} \ar'[d][dd] 
 & & S^1\times S^5  \ar[dd]\ar[ld]
\\
S^5\times S^1\ar[rr]\ar[dd]
& & \zk \ar[dd]
\\
& S^1 \ar[ld]_{*} \ar'[r][rr]^{*}
& & S^5
 \ar[ld]
\\
S^5
\ar[rr]
& & \zk/S^1_d
}}
 \]
 where the top and bottom square are homotopy pushout. Since the fundamental group $\pi_1(S^5)$ is trivial, the homotopy types of $\zk$ and $\zk/S^1_d$ are 
 \[
 \zk\simeq S^1*S^1\vee (S^1\ltimes S^5) \vee (S^5\rtimes S^1)~\text{and}~\zk/S^1_d\simeq S^2\vee 2S^5.
 \]
\end{example}
We  continue to consider the homotopy types of $\mz_{\Delta^k_m}/S^1_d$ by taking a pushout of simplicial complexes in the next section.

\section{Homotopy types of partial quotients}
\label{S:proof}

In this section, we study homotopy types of  $\zk/S^1$. In particular, we determine the homotopy type of the quotient space $\mz_{\Delta^k_m}/S^1_d$ under the diagonal action.
We first consider  properties of moment-angle complexes under subtorus actions in the next lemma.
\begin{lemma}\label{quotientlemma} 
Let $K$ be a simplicial complex on $[m]$ and  let $H$ be a subtorus of $T^m$ acting on $\zk$ and $r=\mathrm{rank} H$.

$\mathrm{(a)}$ For $\sigma\in K$,  $(D^2, S^1)^{\sigma}$ is an $H$-invariant subspace of $\zk$. Consequently, for any simplicial subcomplex $L\subseteq K$, $\zl$ is an $H$-subspace of $\zk$. 

$\mathrm{(b)}$ Let $\Phi\colon H\times \zk \longrightarrow \zk$ be the action map. Then there exists a homeomorphism $\mathrm{sh}\colon H\times \zk \longrightarrow H\times \zk$ such that $p_2\circ \mathrm{sh}=\Phi$, where $p_2$ is a projection  $H\times \zk\longrightarrow\zk$. 

$\mathrm{(c)}$ The action map $\Phi\colon H\times \zk \longrightarrow \zk$ induces a map $\bar{\Phi}\colon H\ltimes \zk \longrightarrow \zk$ with a homotopy cofibre $C_{\bar{\Phi}}\simeq H*\zk$.
\end{lemma}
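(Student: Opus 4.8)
The plan is to treat the three parts in order, using the explicit coordinatewise description of the torus action. For part (a), recall that $\zk = \bigcup_{\sigma\in K}(D^2,S^1)^\sigma$ inside $(D^2)^m$, and that $T^m$ acts coordinatewise by rotation, with $D^2$ and $S^1$ each rotation-invariant in each coordinate. Hence each product piece $(D^2,S^1)^\sigma = Y_1\times\cdots\times Y_m$ (with $Y_i=D^2$ for $i\in\sigma$, $Y_i=S^1$ otherwise) is preserved by the full $T^m$-action, and a fortiori by the subtorus $H$. Since any simplicial subcomplex $L\subseteq K$ gives $\zl=\bigcup_{\sigma\in L}(D^2,S^1)^\sigma$, a union of $H$-invariant pieces, $\zl$ is an $H$-invariant subspace of $\zk$; this is immediate once the single-piece claim is in hand.

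For part (b), I would write down the shear homeomorphism directly. Define $\mathrm{sh}\colon H\times\zk\to H\times\zk$ by $\mathrm{sh}(h,x)=(h,\,h\cdot x)=(h,\Phi(h,x))$. Its continuous inverse is $(h,y)\mapsto(h,h^{-1}\cdot y)$, using continuity of the action and of inversion in $H$; one checks the two composites are the identity by a one-line computation. By construction $p_2\circ\mathrm{sh}(h,x)=h\cdot x=\Phi(h,x)$, which is exactly $p_2\circ\mathrm{sh}=\Phi$. The only point to be careful about is that $\mathrm{sh}$ genuinely lands in $H\times\zk$, which it does because $\zk$ is $H$-invariant.

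For part (c), the action map $\Phi\colon H\times\zk\to\zk$ satisfies $\Phi(1,x)=x$ and $\Phi(h,*)=*$ (the basepoint of $\zk$ is a fixed point of the torus action, being the point whose every coordinate is $0\in D^2$), so $\Phi$ descends through the quotient $H\times\zk\to H\ltimes\zk = (H\times\zk)/(H\times *)$ to a well-defined map $\bar\Phi\colon H\ltimes\zk\to\zk$. To identify the homotopy cofibre, I would exploit the homeomorphism $\mathrm{sh}$ of part (b): it carries the subspace $H\times *$ to $\{(h,h\cdot *):h\in H\}=H\times *$ (again using that $*$ is $H$-fixed), so $\mathrm{sh}$ induces a homeomorphism of pairs $(H\times\zk,\,H\times *)\xrightarrow{\ \cong\ }(H\times\zk,\,H\times *)$, hence a homeomorphism $H\ltimes\zk\to H\ltimes\zk$ under which $\bar\Phi$ becomes the map induced by $p_2$. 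But $p_2\colon H\times\zk\to\zk$ collapsed on $H\times *$ is precisely the projection $H\ltimes\zk\to\zk$, whose homotopy cofibre is the reduced mapping cone; a standard identification gives $C_{p_2\text{-projection}}\simeq \Sigma(H\wedge\zk) = \Sigma H\wedge\zk = H*\zk$, where the last equality is the stated model for the join. Therefore $C_{\bar\Phi}\simeq H*\zk$.

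The main obstacle is the bookkeeping in part (c): one must be sure that the basepoint of $\zk$ is fixed by $H$ so that the half-smash quotient and the maps $\Phi$, $p_2$ all descend compatibly, and that the homeomorphism $\mathrm{sh}$ respects the subspace being collapsed. Once these compatibilities are checked, identifying the cofibre of the projection $H\ltimes\zk\to\zk$ with $H*\zk$ is the familiar fact that the cofibre of $X\ltimes Y\to Y$ is $\Sigma X\wedge Y\simeq X*Y$. Parts (a) and (b) are routine verifications with the explicit coordinatewise action; part (c) is where the half-smash formalism has to be handled with a little care.
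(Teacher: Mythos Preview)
Parts (a) and (b) of your proposal are correct and essentially identical to the paper's argument: the coordinatewise invariance of $D^2$ and $S^1$ under rotation, and the explicit shear $\mathrm{sh}(h,x)=(h,h\cdot x)$ with inverse $(h,y)\mapsto(h,h^{-1}y)$.

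In part (c), however, there is a genuine gap. You assert that the basepoint of $\zk$ is the point with every coordinate equal to $0\in D^2$, and that this point is fixed by $H$. But $(0,\ldots,0)$ lies in $\zk=\bigcup_{\sigma\in K}(D^2,S^1)^\sigma$ only when the full simplex $[m]$ belongs to $K$, i.e.\ only when $K=\Delta^{m-1}$. For a general $K$ the $T^m$-action on $\zk$ has \emph{no} fixed points at all, so there is no choice of basepoint making $\Phi|_{H\times *}$ literally constant. Consequently your passage from $\Phi$ to a strict map $\bar\Phi$ on $H\ltimes\zk$, and your claim that $\mathrm{sh}$ carries $H\times *$ to itself, both fail.

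The paper circumvents this as follows. It takes the basepoint $*=(1,\ldots,1)\in T^m\subset\zk$, which is \emph{not} fixed; instead $\Phi(H\times *)\subset T^m$, and the inclusion $T^m\hookrightarrow\zk$ is null homotopic. Hence $\Phi|_{H\times *}$ is null homotopic, and from the cofibration $H\hookrightarrow H\times\zk\to H\ltimes\zk$ one obtains $\bar\Phi$ with $\bar\Phi\circ q\simeq\Phi$ (only up to homotopy). For the cofibre, the paper uses that $H*\zk$ is the homotopy pushout of $H\xleftarrow{p_1}H\times\zk\xrightarrow{p_2}\zk$; the shear of part (b) replaces $p_2$ by $\Phi$ without changing the pushout, and then collapsing $H$ identifies it with $C_{\bar\Phi}$. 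Your overall strategy for the cofibre is the right shape, but the reduction to the projection must go through this homotopy argument rather than through a nonexistent fixed basepoint.
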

\begin{proof}
(a) Since $H$ is a subtorus of $T^m$, there is an isomorphism $T^r\cong H<T^m$ given by a choice of basis and an $m\times r$ integral matrix $S=(s_{ij})$ such that  $g=(g_1,\ldots, g_m)\in H$ has the form $g_i=t^{s_{i1}}_1\ldots t^{s_{ir}}_r$~$\text{with}~ (t_1, \ldots, t_r)\in T^r$.
Let  $\mathbf{z}=(z_1,\ldots, z_m)\in (D^2, S^1)^{\sigma}$, that is, $z_i\in D^2$ if $i\in \sigma$ and $z_i\in S^1$ if $i\notin \sigma$. Recall that $S^1$ acts on $D^2$ by a rotation. Thus if $z_i\in \mathrm{Int} D^2$, then $g_i\cdot z_i\in \mathrm{Int} D^2$ and if $z_i\in \partial D^2$, then $g_i\cdot z_i\in \partial D^2$. Therefore, $g_i\cdot z_i\in D^2$ if $i\in \sigma$, otherwise $g_i\cdot z_i\in S^1$. Thus $g\cdot \mathbf{z}=(g_1\cdot z_1, \ldots, g_m\cdot z_m) \in (D^2, S^1)^{\sigma}$.

(b) Define the shearing map $H\times \zk \overset{\mathrm{sh}}\longrightarrow H\times \zk$ by $\mathrm{sh}(g, \mathbf{z})=(g, \Phi(g, \mathbf{z}))$ for $g\in H$ and $\mathbf{z}\in \zk$. It is a homeomorphism with  inverse $\mathrm{sh}^{-1}(g,\mathbf{z})=(g, g^{-1}\mathbf{z})$. Thus  $p_2\circ \mathrm{sh}=\Phi$.

(c)  Let $*$ be the base point $(1,  \ldots, 1)$ of $\zk$. Since the image  $\Phi|_{H\times *}$ is in $T^m$ and the inclusion $T^m\longrightarrow \zk$ is null homotopic, thus $\Phi|_{H\times *}$ is also null homotopic.  The homotopy cofibration $H\hookrightarrow H\times \zk \longrightarrow H\ltimes \zk$ gives an induced map $\bar{\Phi}\colon H\ltimes \zk \longrightarrow \zk$ with $\bar{\Phi}\circ q\simeq \Phi$.
Note that $H*\zk$ is the homotopy pushout of $H\overset{p_1}\longleftarrow H\times \zk \overset{p_2}\longrightarrow \zk$. By the second statement, the shearing map $\mathrm{sh}$ is a homeomorphism and $\Phi=p_2\circ sh$, $H*\zk$ is the homotopy pushout of $H\overset{p_1}\longleftarrow H\times \zk \overset{\Phi}\longrightarrow \zk$. Pinching out $H$, we have $C_{\bar{\Phi}}\simeq H*\zk$.
\end{proof}

\subsection{Free circle actions}
Now we focus on circle actions on $\zk$. Suppose that  $S^1\cong H=\{(t^{s_1}, \ldots, t^{s_m})\mid t\in S^1\}$ is a circle subgroup $T^m$, where $s_i\in \Z$.
Let $\Lambda$ be the associated integral matrix of the projection $T^m\longrightarrow T^m/H$.  The relation between $S$ and $\Lambda$ is  as follows.  Since $H$ is a circle subgroup of $T^m$, there exists an integral $m\times (m-1)$-matrix $S^{\prime}$ such that the $m\times m$-matrix $\begin{pmatrix}
S\mid S^{\prime}
\end{pmatrix}$ is invertible, where $S=(s_1,\ldots, s_m)$.   Then
$\begin{pmatrix}
\Lambda^{\prime}\\
\Lambda
\end{pmatrix}$
is the inverse matrix of 
$\begin{pmatrix}
S\mid S^{\prime}
\end{pmatrix}$
where $\Lambda^{\prime}=(\lambda_{ij}^{\prime})$ is an integral $(1\times m)$-vector and $\Lambda=(\lambda_{ij})$ is the  integral $(m-1)\times m$-matrix representing the quotient map $T^m\longrightarrow T^m/H$.
Following this, if $s_1=\pm 1$, then the matrix $\begin{pmatrix}
s_1&\mathbf{0}\\
\mathbf{s} & I_{m-1}
\end{pmatrix}$ has an invertible matrix $\begin{pmatrix}
s_1&\mathbf{0}\\
-s_1\mathbf{s} & I_{m-1}
\end{pmatrix}$, where $\mathbf{s}=(s_2,\ldots, s_m)$. Thus $\Lambda=\begin{pmatrix}
-s_1\mathbf{s} & I_{m-1}
\end{pmatrix}$ such that $\mathrm{Ker}\Lambda=H$.

The next statement applies to the special case of quotient spaces $\zk/S^1$ under free circle actions when $K$ has ghost vertices. 
\begin{lemma}\label{gost}
Suppose that $\{v\}$ is a ghost vertex of $K$. Let $S^1$ acts on $\zk$ by $(s_1,\ldots, s_m)$. If $s_{v}=\pm 1$, then $S^1$ acts on $\zk$ freely and $\zk/S^1 \simeq  \zl$, where $L=K_{\bar{V}}$ is the full subcomplex of $K$ on $\bar{V}=V(K)\setminus \{v\}$.
\end{lemma}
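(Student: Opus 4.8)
The plan is to first dispose of freeness. Since $\{v\}$ is a ghost vertex, every face $\sigma\in K$ avoids $v$, so $T^{\sigma}\subseteq\{g\in T^{m}\mid g_{v}=1\}$. Any $g=(t^{s_{1}},\dots,t^{s_{m}})\in H\cap T^{\sigma}$ then satisfies $t^{s_{v}}=1$; as $s_{v}=\pm1$ this forces $t=1$ and hence $g=\mathbf{1}$. Thus $H\cap T^{\sigma}=\{\mathbf{1}\}$ for every $\sigma\in K$, which by the Remark following Lemma~\ref{fibrations} says exactly that $H=S^{1}$ acts freely on $\zk$.

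For the homotopy equivalence I would make the splitting explicit rather than appeal to the fibrations. Because $v$ is a ghost vertex, the $v$-th coordinate of $(D^{2},S^{1})^{\sigma}$ equals $S^{1}$ for every $\sigma\in K$, and $K$ coincides with $L=K_{\bar V}$ as an abstract complex; hence recording the $v$-coordinate last gives a $T^{m}$-homeomorphism $\zk\cong\zl\times S^{1}$, under which $H$ acts by $t\cdot(\mathbf z,w)=\bigl((t^{s_{i}}z_{i})_{i\neq v},\,t^{s_{v}}w\bigr)$. I would then untwist this action by a shearing homeomorphism in the spirit of Lemma~\ref{quotientlemma}(b): set
\[
\mathrm{sh}\colon \zl\times S^{1}\longrightarrow \zl\times S^{1},\qquad
\mathrm{sh}(\mathbf z,w)=\bigl((z_{i}\,w^{-s_{i}s_{v}})_{i\neq v},\,w\bigr).
\]
This is well defined since multiplication by $w^{-s_{i}s_{v}}\in S^{1}$ preserves $D^{2}$, its interior and its boundary (so it preserves $\zl$), and it is a homeomorphism with inverse $(\mathbf u,w)\mapsto\bigl((u_{i}\,w^{s_{i}s_{v}})_{i\neq v},\,w\bigr)$. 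A direct computation using $s_{v}^{2}=1$ gives $\mathrm{sh}\bigl(t\cdot(\mathbf z,w)\bigr)=\bigl((z_{i}w^{-s_{i}s_{v}})_{i\neq v},\,t^{s_{v}}w\bigr)$; that is, in the sheared coordinates $H$ acts trivially on the $\zl$-factor and by the free transitive rotation $w\mapsto t^{s_{v}}w$ on the $S^{1}$-factor.

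Finally, the $H$-invariant projection $\zl\times S^{1}\to\zl$ is an open continuous surjection whose point-preimages are exactly the $H$-orbits $\{\mathbf u\}\times S^{1}$, so it descends to a homeomorphism $(\zl\times S^{1})/H\xrightarrow{\ \cong\ }\zl$; precomposing with $\mathrm{sh}$ yields $\zk/S^{1}\cong\zl$, which is stronger than the asserted homotopy equivalence. The only step that needs care is checking that $\mathrm{sh}$ is well defined and a homeomorphism while keeping the exponents $s_{i}$ ($i\neq v$) completely arbitrary and using solely $s_{v}=\pm1$; there is no real obstacle, since everything ultimately reduces to the elementary fact that a free transitive $S^{1}$-action on a tensor $S^{1}$-factor can be quotiented away.
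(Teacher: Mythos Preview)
Your proof is correct and follows essentially the same approach as the paper: both exploit the splitting $\zk\cong S^{1}\times\mz_{L}$ and untwist the action so that the quotient becomes $\mz_{L}$, with your shearing map $\mathrm{sh}$ followed by projection reducing exactly to the paper's map $\Phi^{-1}(g,\mathbf{z})=g^{-1}\cdot\mathbf{z}$ (respectively $\Phi$ when $s_{v}=-1$). Your version is in fact more complete, since you supply an explicit freeness argument via $H\cap T^{\sigma}=\{\mathbf{1}\}$ that the paper leaves implicit.
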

\begin{proof}
Without loss of generality, we can assume $\{1\}$ is a ghost vertex of $K$. Then $\zk=S^1\times \mz_{L}$, where $\mz_{L}$ is an $S^1$-space by $(s_2, \ldots, s_m)$. If $s_1=\pm 1$, then $S^1$-action on $\zk$ is an $S^1$-diagonal action  on the product space $S^1\times \mz_{L}$.
Let $\Phi, \Phi^{-1}$ be maps $S^1\times \mz_{L}\longrightarrow \mz_{L}$ where $\Phi$ is the group action and $\Phi^{-1}(g,\mathbf{z})=(g^{-1}, \mathbf{z})$.
Then if $s_1=1$, $\Phi^{-1}$ will induce an $S^1$-equivariant homeomorphism  $\zk/S^1=S^1\times_{S^1} \mz_{L}\cong \mz_{L}$, whose inverse  is given by sending $\mathbf{z}\in \mz_{L}$ to $[(1, \mathbf{z})]\in \zk/S^1$. Similarly, if $s_1=-1$, then the action map $\Phi$ will induce an $S^1$-equivariant homeomorphism.
\end{proof}

For a simplicial complex $K$ and $v\in V(K)$, let
\[\begin{split}
\mathrm{Link}_K(v)&=\{\sigma\in K\mid (v)*\sigma\in K, v\notin \sigma\}\\
\mathrm{Star}_K(v)&=\{\sigma\in K\mid (v)*\sigma\in K\}=(v)*\mathrm{Link}_K(v)\\
\mathrm{Rest}_K(v) &=\{\sigma\in K\mid V(\sigma)\subseteq V(K)\setminus \{v\}\}.
\end{split}
\]

There exists a pushout of simplicial complexes 
\[
\begin{tikzcd}
\mathrm{Link}_K(v)\arrow[d] \arrow[r] &\mathrm{Rest}_K(v) \arrow[d]\\
 \mathrm{Star}_K(v) \arrow[r] & K
\end{tikzcd}
\]
which induces a topological pushout of corresponding Davis-Januszkiewicz spaces.
 Mapping these spaces to $B(T^m/S^1)$, denote by $F_{\lk}, F_{\st}$ and $F_{\rt}$ the correspond homotopy fibres, respectively. Then there is a diagram of homotopy pushouts as follows. 

\[
\scalemath{0.8}{\xymatrix{
& F_{\mathrm{Link}}
\ar[ld] \ar[rr] \ar'[d][dd] 
 & & F_{\mathrm{Rest}}  \ar[dd]\ar[ld]
\\
F_{\mathrm{Star}} \ar[rr]\ar[dd]
& & \zk/S^1 \ar[dd]
\\
&  (BS^1, *)^{\mathrm{Link}_K(v)} \ar[ld] \ar'[r][rr]
& & (BS^1, *)^{\mathrm{Rest}_K(v)}
 \ar[ld]
\\
(BS^1, *)^{\mathrm{Star}_K(v)}
\ar[rr]
& & (BS^1, *)^{K}
}}
 \]

If a circle action on $\zk$ satisfies the condition in Lemma~\ref{gost}, it is possible to identify the homotopy types of these fibres for special cases.

\begin{theorem}\label{circlet}
Let $S^1$ acts on $\zk$ freely. Assume that there exits a vertex  $v\in K$ such that $s_{v}=\pm 1$. 

$\mathrm{(a)}$ There exist homotopy equivalences
\[
F_{\mathrm{Link}}\simeq \mathcal{Z}_{\mathrm{Link}_K(v)},~ F_{\mathrm{Rest}}\simeq \mathcal{Z}_{\mathrm{Rest}_K(v)},~ F_{\mathrm{Star}}\simeq  \mathcal{Z}_{\mathrm{Link}_K(v)}/S^1.
\]

$\mathrm{(b)}$ The quotient space $\zk/S^1$ is the homotopy pushout of the diagram \[
\mz_{\lk_K(v)}/S^1\overset{q}\longleftarrow\mz_{\lk_K(v)}\overset{\iota}\longrightarrow \mz_{\rt_K(v)}\]
where $\iota$ is the map induced by the simplicial inclusion $\lk_K(v)\longrightarrow \rt_K(v)$ and $q$ is the quotient map.
\end{theorem}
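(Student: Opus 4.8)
The plan is to build everything on top of the big diagram of homotopy pushouts that precedes the statement, the one obtained from the simplicial pushout $\lk_K(v)\leftarrow\lk_K(v)\to\rt_K(v)$ together with $\st_K(v)$, by mapping the Davis-Januszkiewicz spaces to $B(T^m/S^1)$ and taking homotopy fibres. That cube already tells us that $\zk/S^1$ is the homotopy pushout of $F_{\st}\leftarrow F_{\lk}\to F_{\rt}$, so part (b) follows immediately from part (a) once we identify the three fibres and check that under these identifications the left-hand map becomes the quotient map $q$ and the top map becomes $\iota$. So the real work is part (a).

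For $F_{\lk}$ and $F_{\rt}$ the idea is that $\lk_K(v)$ and $\rt_K(v)$ are full subcomplexes of $K$ on vertex sets missing $v$ (indeed $\rt_K(v)=K_{\bar V}$ and $\lk_K(v)$ sits inside it), so the coordinate $s_v$ plays no role: restricting the circle subgroup $H\subset T^m$ to the coordinates of $\lk_K(v)$ (resp. $\rt_K(v)$) still satisfies $H\cap T^\sigma=\{\mathbf 1\}$, hence by Lemma~\ref{fibrations} applied to these subcomplexes the homotopy fibre of $(BS^1,*)^{\lk_K(v)}\to B(T^m/H)$ is the corresponding partial quotient; but because the $v$-coordinate of the circle is a unit and $v$ is a ghost vertex of these subcomplexes (viewed inside the ambient vertex set $[m]$), Lemma~\ref{gost} identifies that partial quotient with the ``honest'' moment-angle complex $\mathcal Z_{\lk_K(v)}$ (resp. $\mathcal Z_{\rt_K(v)}$) on the vertex set without $v$. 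More carefully: map $(BS^1,*)^{\lk_K(v)}$ to $BT^m$ to get $\mathcal Z_{\lk_K(v),\bar V}$ (with a ghost vertex at $v$), which splits as $S^1\times\mathcal Z_{\lk_K(v)}$; then composing with $B(T^m/H)$ and using that $s_v=\pm1$ gives, via the shearing/diagonal-action argument of Lemma~\ref{gost}, a homeomorphism of the fibre with $\mathcal Z_{\lk_K(v)}$. The same argument handles $F_{\rt}$.

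For $F_{\st}$ we use that $\st_K(v)=(v)*\lk_K(v)$ is a cone with apex $v$. The moment-angle complex of a cone $(v)*L$ on vertex set $\{v\}\cup V(L)$ is $D^2\times\mathcal Z_L$ with the $S^1$ acting on the $D^2$-factor by the $v$-coordinate and on $\mathcal Z_L$ by the remaining coordinates; equivalently $(BS^1,*)^{\st_K(v)}\to BT^m$ has fibre $D^2\times\mathcal Z_{\lk_K(v)}$, which is $T^m$-equivariantly (hence $H$-equivariantly) homotopy equivalent to $\mathcal Z_{\lk_K(v)}$ but with the $H$-action twisted by an extra free $S^1$-rotation on the contractible $D^2$-factor. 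Mapping further to $B(T^m/H)$, the fibre $F_{\st}$ becomes $(D^2\times\mathcal Z_{\lk_K(v)})/S^1$; since the $S^1$ acts freely (because $s_v=\pm1$ makes it free already on the $D^2$-factor), this is a genuine quotient, and contracting the $D^2$ equivariantly — or using that $D^2\times\mathcal Z_{\lk_K(v)}\to\mathcal Z_{\lk_K(v)}$ is an $S^1$-equivariant homotopy equivalence — yields $F_{\st}\simeq\mathcal Z_{\lk_K(v)}/S^1$. Finally one checks naturality: the inclusion-induced maps $F_{\lk}\to F_{\rt}$ and $F_{\lk}\to F_{\st}$ are, under the above identifications, exactly $\iota\colon\mathcal Z_{\lk_K(v)}\to\mathcal Z_{\rt_K(v)}$ and the quotient $q\colon\mathcal Z_{\lk_K(v)}\to\mathcal Z_{\lk_K(v)}/S^1$, which is immediate from how the shearing homeomorphisms were defined.

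I expect the main obstacle to be bookkeeping rather than conceptual: making the ghost-vertex reductions compatible across all three subcomplexes simultaneously, so that the three fibre identifications are realised by mutually compatible ($H$-equivariant, or at least coherent-up-to-homotopy) homeomorphisms, and thus the identification of $\zk/S^1$ as the pushout of $\mathcal Z_{\lk_K(v)}/S^1\leftarrow\mathcal Z_{\lk_K(v)}\to\mathcal Z_{\rt_K(v)}$ really commutes with the maps in the cube. The shearing map of Lemma~\ref{quotientlemma}(b) is the right tool for this, since it is natural in $\zk$ with respect to inclusions of full subcomplexes, so the coherence should go through; but it needs to be spelled out. The freeness hypothesis ($S^1$ acts freely on $\zk$, guaranteed since $s_v=\pm1$) is what lets us replace homotopy quotients by strict quotients throughout.
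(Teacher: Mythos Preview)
Your overall architecture is right and part~(a) is essentially the paper's argument: Lemma~\ref{gost} for $F_{\lk}$ and $F_{\rt}$, and the cone decomposition $\mz_{\st_K(v)}=D^2\times\mz_{\lk_K(v)}$ together with freeness and Borel constructions for $F_{\st}$. One point you skip but the paper makes explicit: to pass from $ES^1\times_{S^1}(D^2\times\mz_{\lk_K(v)})$ to $\mz_{\lk_K(v)}/S^1$ one needs the \emph{induced} $(s_2,\dots,s_m)$-action on $\mz_{\lk_K(v)}$ to be free, which is not the hypothesis ($s_v=\pm1$ only gives freeness on the $D^2$-factor). The paper checks this by comparing isotropy at $(0,z_2,\dots,z_m)\in\mz_{\st_K(v)}$ with isotropy at $(z_2,\dots,z_m)\in\mz_{\lk_K(v)}$.

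For part~(b) your plan ``naturality of the shearing map handles the compatibility'' is a workable alternative, but it is not what the paper does, and Lemma~\ref{quotientlemma}(b) is not the tool the paper invokes. Instead the paper works at the level of classifying spaces using the explicit matrix $\Lambda=\begin{pmatrix}-s_1\mathbf{s}&I_{m-1}\end{pmatrix}$ for $T^m\to T^m/S^1$. This immediately shows that the composite $BT^{m-1}\hookrightarrow BT^m\to B(T^m/S^1)$ is the identity, so for $L=\lk_K(v)$ or $\rt_K(v)$ the twisted fibration $(BS^1,*)^L\to B(T^m/S^1)$ \emph{is} the standard one $(BS^1,*)^L\to BT^{m-1}$; this identifies both $F_{\lk},F_{\rt}$ and the map between them as $\iota$ in one stroke. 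For $q$ the paper builds two commutative ladders of fibrations (its diagrams~(\ref{fibre1}) and~(\ref{fibre2})): it factors $T^m/S^1\cong S^1\times T^{m-2}$ via the images of $\mathbf{s}=(s_2,\dots,s_m)$ and of the quotient $\pi\colon T^{m-1}\to T^{m-1}/S^1$, and shows that under this isomorphism the map $(BS^1,*)^{\st_K(v)}\to B(T^m/S^1)$ becomes $\mathrm{id}\times\gamma$ on $BS^1\times(BS^1,*)^{\lk_K(v)}$, with $\gamma=(B\pi)\circ\eta$. Comparing with Lemma~\ref{fibrations} applied to the induced $(s_2,\dots,s_m)$-action on $\mz_{\lk_K(v)}$ then forces the fibre map $F_{\lk}\to F_{\st}$ to be $q$.

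So: your route (trace the explicit homeomorphisms from Lemma~\ref{gost} through the inclusion $S^1\hookrightarrow D^2$) can be made to work and is arguably more elementary, but what you call ``immediate'' is exactly where the paper spends most of its effort, and your cited tool is not the one that actually appears. If you pursue your approach, you should (i) verify freeness of the induced action on $\mz_{\lk_K(v)}$, and (ii) write down the composite $\mz_{\lk_K(v)}\xrightarrow{\eta^{-1}}(S^1\times\mz_{\lk_K(v)})/S^1\hookrightarrow(D^2\times\mz_{\lk_K(v)})/S^1\to\mz_{\lk_K(v)}/S^1$ explicitly and check it is $q$; both are short but not automatic.
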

\begin{proof}
(a) Without loss of generality, assume that $v=\{1\}$. Since $\mathrm{Link}_K(1)$ and $\mathrm{Rest}_K(1)$ are on the vertex set $\{2, \ldots, m\}$, 
 $F_{\mathrm{Link}}\simeq \mathcal{Z}_{\mathrm{Link}_K(1)},~ F_{\mathrm{Rest}}\simeq \mathcal{Z}_{\mathrm{Rest}_K(v)}$ by Lemma~\ref{gost}.
Since $S^1$ acts on $\mz_{\st_K(1)}$ freely, its quotient $\mz_{\st_K(1)}/S^1$ is homotopy equivalent to the Borel construction $ES^1\times_{S^1}\mz_{\st_K(1)}$,
 where $\mz_{\st_K(1)}=D^2\times \mz_{\lk_K(1)}$.  Since $S^1$ acts on $\mz_{\lk_K(1)}$ freely, $F_{\st}=\mathcal{Z}_{\st_K(1)}/S^1 \simeq ES^1\times_{S^1}\mz_{\st_K(1)}\simeq ES^1 \times_{S^1} (D^2\times \mathcal{Z}_{\lk_K(1)})\simeq \mz_{\lk_K(1)}/S^1$. 

(b) It suffices to identify the maps between these fibres. Since $s_1=\pm 1$, the matrix $\Lambda$ representing the projection $T^m\longrightarrow T^m/S^1$ is given by
 $\begin{pmatrix}
-s_1\mathbf{s} & I_{m-1}
\end{pmatrix}$ with $\mathbf{s}=(s_2, \ldots, s_m)^{t}$. Therefore, the composite $BT^{m-1}\overset{Bj}\longrightarrow BT^m \longrightarrow B(T^m/S^1)$ is the identity map, where $j$ is an inclusion of $T^{m-1}$ to the last $m-1$ coordinates of $T^m$. 
Thus
for $L$ being $\lk_K(1)$ or $\rt_K(1)$, the composite $(BS^1, *)^L\longrightarrow BT^m \overset{B\Lambda}\longrightarrow B(T^m/S^1)$ is the standard inclusion $(BS^1, *)^L\longrightarrow BT^{m-1}$ if $T^{m-1}$ is identified with $T^m/S^1$.
 Therefore, the map between the fibres $F_{\lk}\longrightarrow F_{\rt}$ is the inclusion between the corresponding moment-angle complexes $\mz_{\lk}\overset{\iota}\longrightarrow \mz_{\rt}$.

There exists an induced free circle action on $\mz_{\lk_K(1)}$ given by $g\cdot (z_2,\ldots, z_m)= (g^{s_2}\cdot z_2,\ldots, g^{s_m}\cdot z_m)$. 
We first note that $\mathrm{Im}\, \mathbf{s}=\{(t_2^{s_2}, \ldots, t_m^{s_m})\mid (t_2,\ldots, t_m)\in T^{m-1}\}$ is  a  circle subgroup of $T^{m-1}$. Because  we assume that $\{1\}\in K$,  the freeness condition of  a circle action on $\zk$ implies that $\gcd(s_2, \ldots, s_m)=1$. 
To see that this induced action is free, send $(z_2, \ldots, z_m) \in \mz_{\lk_K(1)}$ to $(0, z_2, \ldots, z_m)\in \mz_{\st_K(1)}$. The isotropy group of $(0, z_2, \ldots, z_m)$ under the original $S^1$-action  by $(s_1, s_2, \ldots, s_m)$ is equal to the isotropy group of $(z_2, \ldots, z_m)$ under the induced $S^1$-action  by $(s_2, \ldots, s_m)$. Since the original $S^1$-action  acts on $\mz_{\st_K(1)}$ freely, the isotropy group of $(0, z_2, \ldots, z_m)$ is trivial, which means that the $S^1$-action on $\mz_{\lk_K(1)}$ by $(s_2, \ldots, s_m)$ is free.

This circle subgroup of $T^{m-1}$ has an associated integral matrix $\pi$ representing the quotient map $T^{m-1}\longrightarrow T^{m-1}/S^1$.  There is a homotopy commutative diagram of fibrations
\begin{equation}\label{fibre1}
\begin{tikzcd}
&\mz_{\lk_K(1)}\arrow[r]\arrow[d, equal]& (BS^1, *)^{\lk_K(1)}\arrow[d, equal] \arrow[rr, "(B\Lambda)\circ i"] & & B(T^m/S^1)\arrow[d, "\simeq"] \\
&\mz_{\lk_K(1)}\arrow[r]\arrow[d, "q"]& (BS^1, *)^{\lk_K(1)}\arrow[d, equal] \arrow[rr, "\eta"] & & BT^{m-1}\arrow[d, "B\pi"] \\
&\mz_{\lk_K(1)}/S^1 \arrow[r] \arrow[d, equal] &(BS^1, *)^{\lk_K(1)} \arrow[rr, "\gamma=(B\pi)\circ \eta"] \arrow[d, hook, "j_2"] & & B(T^{m-1}/S^1)\arrow[d, hook, "j_2"] \\
& \mz_{\lk_K(1)}/S^1 \arrow[r] &BS^1\times (BS^1, *)^{\lk_K(1)} \arrow[rr, "\mathrm{id}\times \gamma"] & & BS^1\times B(T^{m-1}/S^1)
\end{tikzcd}
\end{equation}
where the top rectangle is obtained by $T^m/S^1$ being identified with $T^{m-1}$, the second rectangle is due  to Lemma~\ref{fibrations}, and $q$ is a quotient map and $j_2$ is an inclusion into the second coordinate.

In fact, the  homotopy fibration at the bottom row in~\eqref{fibre1} is equivalent to  the homotopy fibration obtained by mapping $(BS^1, *)^{\st_K(1)}$ to $B(T^m/S^1)$
\[
F_{\st}\longrightarrow (BS^1, *)^{\st_K(1)}\overset{(B\Lambda)\circ i}\longrightarrow B(T^m/S^1).
\]
The relation between $(s_2, \ldots, s_m)$ and $\pi$ implies that $T^m/S^1$ is isomorphic to $\mathrm{Im}\, \mathbf{s} \times \mathrm{Im}\, \pi$, where $\mathrm{Im}\, \mathbf{s}$ and $\mathrm{Im}\, \pi$ are torus groups with rank $1$ and $m-2$, respectively.
Thus there are   isomorphisms 
 $T^m/S^1\overset{M_1}\longrightarrow  \ima\mathbf{s} \times \ima \pi \overset{M_2}\longrightarrow S^1\times T^{m-2}$, which are represented by an $(m-1)\times (m-1)$-integral invertible matrices $M_1$ and $M_2$.
Let $M=M_2 M_1$. Composing $BM$ with $(B\Lambda)\circ i$, we  have a  diagram of homotopy fibrations
\begin{equation}\label{fibre2}
\begin{tikzcd}
F_{\st}\arrow[r] \arrow[d] &(BS^1, *)^{\st_K(1)}\arrow[rr, "(B\Lambda)\circ i"]\arrow[d, equal] & &B(T^m/S^1) \arrow[d, "BM"] \\
F \arrow[r] &(BS^1, *)^{\st_K(1)}\arrow[rr, "(BM)\circ (B\Lambda)\circ i"] & & BS^1\times BT^{m-2}
\end{tikzcd}
\end{equation} where the left square is $\hp$ and the right one is commutative and all vertical maps are homotopy equivalences. 

Since $(BS^1, *)^{\st_K(1)}=BS^1\times (BS^1, *)^{\lk_K(1)}$, the composite $(BM)\circ B\Lambda\circ i=\mathrm{id}\times \gamma$.
Combining these  homotopy commutative diagrams~(\ref{fibre1}) and (\ref{fibre2}), the simplicial inclusion $\lk_K(1)\longrightarrow \st_K(1)$ induces a quotient map of the fibres $\mz_{\lk_K(1)}\overset{q}\longrightarrow \mz_{\lk_K(1)}/S^1$.
\end{proof}
In some special case,  the map $\mz_{\lk_K(v)}\longrightarrow \mz_{\rt_K(v)}$ is null homotopic. For example, if for some $v\in K$ such that $\lk_K(v)=\emptyset$, then $\mz_{\lk_K(v)}\longrightarrow \mz_{\rt_K(v)}$ is null homotopic (\cite[Lemma 3.3]{GT1}). If so, there is a homotopy splitting of the quotient $\zk/S^1$.

\begin{corollary}
Let $K$ and $S^1$ satisfy  the  assumption in Theorem~\ref{circlet}. Suppose that for the same vertex $v$,   the map $\mz_{\lk_K(v)}\longrightarrow \mz_{\rt_K(v)}$ is null homotopic. Then 
there exists a homotopy splitting 
$
\zk/S^1\simeq \mz_{\rt_K(v)}\vee C_q
$, where $C_q$ is the homotopy cofibre of the quotient map $\mz_{\lk_K(v)}\overset{q}\longrightarrow \mz_{\lk_K(v)}/S^1$.

In particular, if $\lk_K(v)=\emptyset$, then $\zk/S^1\simeq \mz_{\rt_K(v)}\vee S^2\vee(S^1*T^{m-2})$.
\end{corollary}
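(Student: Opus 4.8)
The plan is to feed the homotopy pushout description of $\zk/S^1$ supplied by Theorem~\ref{circlet}(b) into the elementary fact that a homotopy pushout one of whose legs is null-homotopic splits off a wedge summand, and then, in the case $\lk_K(v)=\emptyset$, to recognise the resulting cofibre $C_q$ as the Thom space of a trivial complex line bundle. For the first step, recall from Theorem~\ref{circlet}(b) that $\zk/S^1$ is the homotopy pushout of $\mz_{\lk_K(v)}/S^1 \xleftarrow{\,q\,} \mz_{\lk_K(v)} \xrightarrow{\,\iota\,} \mz_{\rt_K(v)}$. I will use the following standard observation: if $g$ in a homotopy pushout of well-pointed spaces $Y \xleftarrow{f} X \xrightarrow{g} Z$ is null-homotopic, then, after replacing $g$ by the constant map (which does not change the homotopy pushout) and factoring it through a point, the double mapping cylinder becomes $\bigl(Y \cup_X CX\bigr) \vee Z$, so the pushout is homotopy equivalent to $C_f \vee Z$, where $C_f$ is the homotopy cofibre of $f$. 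Taking $f=q$ and $g=\iota$ (null-homotopic by hypothesis) gives at once $\zk/S^1 \simeq \mz_{\rt_K(v)} \vee C_q$, which is the first assertion.

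For the ``in particular'' clause, assume $\lk_K(v)=\emptyset$. Viewing $\lk_K(v)$ as a complex with ghost vertices on $V(K)\setminus\{v\}$, we have $\mz_{\lk_K(v)} = T^{m-1}$, and by the analysis in the proof of Theorem~\ref{circlet} the free $S^1$-action on it has exponents with greatest common divisor $1$; completing a primitive vector to a $\Z$-basis (a change of coordinates in $GL_{m-1}(\Z)$) conjugates this action to translation on a single circle factor, so $q$ is homotopy equivalent to the projection $S^1 \times T^{m-2} \to T^{m-2}$ of a \emph{trivial} principal $S^1$-bundle. I then identify $C_q$ with a Thom space: the disk bundle $T^{m-1}\times_{S^1}D^2$ of the associated complex line bundle $L \to T^{m-2}$ deformation retracts onto its zero section $T^{m-2}$, its sphere bundle is $T^{m-1}\times_{S^1}S^1 \cong T^{m-1} = \mz_{\lk_K(v)}$, and the composite of the sphere-bundle inclusion with the retraction is exactly $q$; hence $C_q \simeq (T^{m-1}\times_{S^1}D^2)/T^{m-1} = \mathrm{Th}(L)$. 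Triviality of the bundle forces $L$ to be trivial, and the Thom space of a trivial rank-$2$ real bundle over $W$ is $\Sigma^2(W_+) \simeq S^2 \vee \Sigma^2 W$; with $W = T^{m-2}$ and $S^1 * W \simeq \Sigma(S^1 \wedge W) \simeq \Sigma^2 W$ this gives $C_q \simeq S^2 \vee (S^1 * T^{m-2})$, and combining with the first part yields $\zk/S^1 \simeq \mz_{\rt_K(v)} \vee S^2 \vee (S^1 * T^{m-2})$.

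The only point requiring care is the wedge-splitting step: $\iota$ is given merely as (freely) null-homotopic, and one must ensure this produces a \emph{based} null-homotopy so that the double mapping cylinder genuinely splits $\mz_{\rt_K(v)}$ off as a wedge summand; this is automatic here since all the spaces involved have the homotopy type of well-pointed CW complexes. The remaining ingredients — the identification $\mz_{\lk_K(v)} = T^{m-1}$, the greatest-common-divisor reduction to a trivial $S^1$-bundle over $T^{m-2}$, and the trivial Thom space computation — are routine.
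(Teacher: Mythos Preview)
Your argument is correct. The first assertion follows exactly as in the paper: both you and the author factor the null-homotopic leg through a point and read off the wedge splitting from the resulting iterated homotopy pushout.

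For the special case $\lk_K(v)=\emptyset$, your route diverges from the paper's. The paper identifies the composite $pq$ with a coordinate projection by chasing a diagram of fibration sequences over $B(T^m/S^1)\cong BS^1\times BT^{m-2}$, and then computes the cofibre of a projection $\pi_2\colon X\times Y\to Y$ via a small iterated-pushout trick, obtaining $C_{\pi_2}\simeq \Sigma X\vee X*Y$. You instead trivialise the principal $S^1$-bundle directly by the lattice argument (complete the primitive exponent vector to a $GL_{m-1}(\Z)$-basis), and then package the cofibre as the Thom space of the associated trivial line bundle, giving $\mathrm{Th}(\underline{\C})\simeq \Sigma^2(T^{m-2}_+)\simeq S^2\vee \Sigma^2 T^{m-2}$. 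Your identification of $q$ with a projection is quicker and avoids the fibration machinery; the Thom-space step is conceptually neat, though the paper's pushout computation is arguably more elementary since it uses nothing beyond the fact that $X\to X*Y$ is null. Either way the outputs agree via $S^1*T^{m-2}\simeq \Sigma^2 T^{m-2}$.
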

\begin{proof}
If the map $\mz_{\lk_K(v)}\longrightarrow \mz_{\rt_K(v)}$ is null homotopic,  there is an iterated homotopy pushout
\[
\begin{tikzcd}
\mz_{\lk_K(v)} \arrow[r] \arrow[d, "q"] & *\arrow[d] \arrow[r] &\mz_{\rt_K(v)}\arrow[d]\\
\mz_{\lk_K(v)}/S^1 \arrow[r] & C_q \arrow[r] &\zk/S^1.
\end{tikzcd}
\] Thus the first statement follows.

If $\lk_K(v)=\emptyset$, then $\mz_{\lk_K(v)}\simeq T^{m-1}$ and  $\st_K(v)=\{v\}$.
Consider the following diagram of fibration sequences
\[
\begin{tikzcd}
\mz_{\emptyset} \arrow[r] \arrow[d, "q"] & * \arrow[d, hook] \arrow[r] &B(T^m/S^1)\arrow[d, equal]\\
\mz_{\emptyset}/S^1 \arrow[r] \arrow[d, "p"', "\simeq"] & BS_v^1 \arrow[r] \arrow[d, equal]& B(T^m/S^1)\arrow[d, "\simeq"]\\
\Omega BT^{m-2} \arrow[r] & BS_v^1\arrow[r, "\mathrm{id}\times *"] & BS^1\times BT^{m-2}.
\end{tikzcd}
\] 
Here the top diagram between homotopy fibrations is induced  by $\emptyset\longrightarrow \{v\}$ and the bottom diagram is an equivalence of fibration sequences, proved as a special case of diagram~(\ref{fibre2}) in Theorem~\ref{circlet}, due to the isomorphism  $T^m/S^1\cong S^1\times T^{m-2}$. Since $p$ is a homotopy equivalence, we have $C_q\simeq C_{p q}$. Note that the composition $p q$ is induced by projecting $T^m/S^1\longrightarrow T^{m-2}$. Precisely, it is the map $T^m/S^1\overset{\cong}\longrightarrow S^1\times T^{m-2} \overset{p_2}\longrightarrow T^{m-2}$.
Therefore, it remains to identify the homotopy cofibre of $p_2$. 

Let $\pi_2\colon X\times Y \longrightarrow Y$ be a projection, where $X$ and $Y$ are two connected CW-complexes. Consider the following homotopy commutative diagram
\[\begin{tikzcd}
X \times Y \arrow[r, "\pi_1"] \arrow[d, "\pi_2"] & X\arrow[d] \arrow[r] &*\arrow[d]\\
Y\arrow[r] &X*Y\arrow[r] &C_{\pi_2}
\end{tikzcd}
\]
where the left and right diagrams are homotopy pushouts. Since $X\longrightarrow X*Y$ is null homotopic, $C_{\pi_2}\simeq \Sigma X\vee X*Y$.
Thus $C_{q}\simeq \Sigma S^1\vee (S^1* T^{m-2})$. 
\end{proof}
\begin{example}
Denote by $\zm$ the moment-angle complex corresponding to $m$ disjoint points. If $S^1$ acts freely  on $\zm$ by $(s_1, \ldots, s_m)$ with some $s_j=\pm 1$, then $\zm/S^1\simeq \mz_{m-1}\vee S^2\vee (S^1*T^{m-2})$.
\end{example}

\subsection{Homotopy types of cofibres}\label{zkms1d}
In this section, we determine homotopy cofibre $C_{k,m}$ of  the quotient map $q_{k, m}\colon \mathcal{Z}_{\Delta_m^k}\longrightarrow \mathcal{Z}_{\Delta_m^k}/S^1_d$ under the diagonal action. 
Note that if $K=\Delta^{k}_{m}$ is on  the vertex set $\{1, \ldots, m\}$, then $\mathrm{Link}_K\{1\}$ is simplicially isomorphic to $\Delta^{k-1}_{m-1}$ on the vertex set $\{2,\ldots, m\}$. Thus we have a pushout of simplicial complexes 
\[
\begin{tikzcd}
\Delta^{k-1}_{m-1}\arrow[d]\arrow[r]& \Delta^{k}_{m-1}\arrow[d]\\
(1)*\Delta^{k-1}_{m-1}\arrow[r] & \Delta^{k}_{m}.
\end{tikzcd}
\]
This pushout implies  homotopy pushouts of the corresponding moment-angle complexes and of their quotient spaces under the diagonal action by Lemma~\ref{cubeq}.
\begin{equation}\label{tpoutk}
\begin{tikzcd}
&S^1\times \mathcal{Z}_{\Delta^{k-1}_{m-1}}\arrow[r, "\mathrm{id}\times *"]\arrow[d, "*\times \mathrm{id}"] & S^1\times \mathcal{Z}_{\Delta^{k}_{m-1}}\arrow[d, "f_{k, m}"]\\
&\mathcal{Z}_{\Delta^{k-1}_{m-1}} \arrow[r] & \mz_{\Delta^{k}_{m}}
\end{tikzcd}
~~~
\begin{tikzcd}
& \mathcal{Z}_{\Delta^{k-1}_{m-1}}\arrow[r, "\simeq *"]\arrow[d, "q_{k-1, m-1}"] &  \mathcal{Z}_{\Delta^{k}_{m-1}}\arrow[d, "g_{k, m}"]\\
&\mathcal{Z}_{\Delta^{k-1}_{m-1}}/S_d^1 \arrow[r] & \mz_{\Delta^{k}_{m}}/S_d^1
\end{tikzcd}
\end{equation}
where $f_{k, m}$ is a  map induced by the simplicial inclusion $\Delta^{k}_{m-1} \longrightarrow \Delta^{k}_{m}$ and the map $g_{k, m}$  is induced by $f_{k,m}$ between the quotient spaces.

The left diagram in (\ref{tpoutk}) implies an iterated homotopy pushout
\[
\begin{tikzcd}
&S^1\times \mathcal{Z}_{\Delta^{k-1}_{m-1}}\arrow[r]\arrow[d, "*\times \mathrm{id}"] 
& S^1\arrow[d] \arrow[r] 
& S^1 \times \mathcal{Z}_{\Delta^{k}_{m-1}} 
\arrow[d, "f_{k, m}"]\\
&\mathcal{Z}_{\Delta^{k-1}_{m-1}} 
\arrow[r]
 & S^1 *  \mathcal{Z}_{\Delta^{k-1}_{m-1}}
  \arrow[r] 
 & \mz_{\Delta^{k}_{m}}
\end{tikzcd}
\]
which induces the following iterated homotopy pushout  after pinching out $S^1$
\begin{equation}\label{pushout}
\begin{tikzcd}
&S^1\ltimes \mathcal{Z}_{\Delta^{k-1}_{m-1}}
\arrow[r]\arrow[d, "*\ltimes \mathrm{id}"]
& *\arrow[r] \arrow[d]
& S^1\ltimes \mathcal{Z}_{\Delta^{k}_{m-1}}
\arrow[d, "\bar{f}_{k, m}"]\\
&\mathcal{Z}_{\Delta^{k-1}_{m-1}}
 \arrow[r]& 
 S^1*  \mathcal{Z}_{\Delta^{k-1}_{m-1}} 
 \arrow[r, "h_{k,m}"] & \mz_{\Delta^{k}_{m}}.
\end{tikzcd}
\end{equation}
Thus the right square of (\ref{pushout}) implies  a  splitting homotopy cofibration  $S^1\ltimes \mathcal{Z}_{\Delta^{k}_{m-1}} \overset{\bar{f}_{k,m}}\longrightarrow \mz_{\Delta^{k}_{m}}\longrightarrow C_{\bar{f}_{k, m}}$, 
 where the homotopy cofibre  $C_{\bar{f}_{k, m}}$ is homotopic to $ S^1*\mathcal{Z}_{\Delta^{k-1}_{m-1}}$.

Since the map $\mz_{\Delta_{m-1}^{k-1}}\longrightarrow \mz_{\Delta_{m-1}^k}$ is null homotopic,
 the right homotopy pushout in~(\ref{tpoutk}) also implies  an iterated homotopy pushout
\begin{equation}\label{pushout2}
\begin{tikzcd}
\mz_{\Delta_{m-1}^{k-1}}\arrow[r] \arrow[d, "q_{k-1,m-1}"] & * \arrow[r]\arrow[d] & \mz_{\Delta_{m-1}^k}\arrow[d, "g_{k,m}"]\\
\mz_{\Delta_{m-1}^{k-1}}/S_d^1 \arrow[r] & C_{k-1, m-1} \arrow[r, "h_{k,m}^{\prime}"] & \mz_{\Delta_{m}^k}/S^1_d.
\end{tikzcd}
\end{equation}
 The right square of (\ref{pushout2}) implies a splitting homotopy cofibration $\mz_{\Delta_{m-1}^k}\overset{g_{k,m}}\longrightarrow \mz_{\Delta_m^k}/S^1_d\longrightarrow C_{g_{k,m}}$, where  the homotopy cofibre $C_{g_{k,m}}$ is homotopic to $C_{k-1, m-1}$.

\begin{lemma}\label{lemzkm}
There exists a homotopy equivalence $\mz_{\dk}/S^1_d\simeq \mz_{\Delta^k_{m-1}}\vee C_{k-1, m-1}$, where $C_{k-1, m-1}$ is the homotopy cofibre of the quotient map $\mz_{\Delta_{m-1}^{k-1}}\longrightarrow \mz_{\Delta_{m-1}^{k-1}}/S^1_d$.
\end{lemma}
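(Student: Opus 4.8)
The plan is to read off the required splitting directly from the iterated homotopy pushout~\eqref{pushout2}, every square of which has already been established to be a homotopy pushout. The decisive feature is that the right-hand square of~\eqref{pushout2},
\[
\begin{tikzcd}
* \arrow[r]\arrow[d] & \mz_{\Delta_{m-1}^k}\arrow[d, "g_{k,m}"]\\
C_{k-1, m-1} \arrow[r, "h_{k,m}^{\prime}"] & \mz_{\Delta_{m}^k}/S^1_d,
\end{tikzcd}
\]
is a homotopy pushout whose top-left vertex is a point. Since $\mz_{\Delta^k_{m-1}}$ and $C_{k-1,m-1}$ are based, path-connected CW-complexes, the two maps out of this point are homotopic to the respective basepoint inclusions.

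First I would invoke the elementary fact that the homotopy pushout of a diagram $X\longleftarrow *\longrightarrow Y$ along basepoint inclusions is the wedge $X\vee Y$ (the double mapping cylinder deformation retracts onto $X\vee Y$). Applied to the square above, this gives
\[
\mz_{\Delta^k_m}/S^1_d\;\simeq\;\mz_{\Delta^k_{m-1}}\vee C_{k-1,m-1}.
\]
By the convention fixed at the start of Section~\ref{zkms1d}, $C_{k-1,m-1}$ is exactly the homotopy cofibre of the quotient map $q_{k-1,m-1}\colon \mz_{\Delta^{k-1}_{m-1}}\longrightarrow\mz_{\Delta^{k-1}_{m-1}}/S^1_d$, which is the stated form.

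Equivalently, one can argue through the splitting homotopy cofibration $\mz_{\Delta^k_{m-1}}\overset{g_{k,m}}{\longrightarrow}\mz_{\Delta^k_m}/S^1_d\longrightarrow C_{g_{k,m}}$ extracted from the right square of~\eqref{pushout2}: it splits precisely because the top-left vertex of that square is a point, and $C_{g_{k,m}}\simeq C_{k-1,m-1}$ follows from the pasting law for homotopy pushouts applied to the left square of~\eqref{pushout2} (which identifies $C_{k-1,m-1}$ with the mapping cone of $q_{k-1,m-1}$). I expect no genuine obstacle at this stage: the real content lies upstream, in the construction of~\eqref{pushout2}, which uses Lemma~\ref{cubeq} and the null-homotopy of $\mz_{\Delta^{k-1}_{m-1}}\to\mz_{\Delta^k_{m-1}}$ taken from \cite{GT1}; everything here is formal. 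In particular this lemma reduces the computation of the homotopy type of $\mz_{\Delta^k_m}/S^1_d$ in the main theorem to identifying the single cofibre $C_{k-1,m-1}$, which is the task of the remainder of Section~\ref{zkms1d}.
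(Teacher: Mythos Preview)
Your proposal is correct and is essentially the paper's own argument: the lemma is stated immediately after the paper observes that the right square of~\eqref{pushout2} gives a splitting homotopy cofibration $\mz_{\Delta_{m-1}^k}\overset{g_{k,m}}\longrightarrow \mz_{\Delta_m^k}/S^1_d\longrightarrow C_{g_{k,m}}$ with $C_{g_{k,m}}\simeq C_{k-1,m-1}$, which is precisely the second formulation you spell out. Your first formulation (homotopy pushout of $X\longleftarrow *\longrightarrow Y$ is $X\vee Y$) is just the same observation rephrased, so there is no substantive difference between the two routes.
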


 Hence, to determine the homotopy type of $\mz_{\dk}/S^1_d$, it suffices to determine the homotopy type of $C_{k,m}$.

\begin{lemma}
There exists a homotopy commutative diagram
\begin{equation}\label{cd}
\begin{tikzcd}
&S^1\ltimes \mathcal{Z}_{\Delta^{k}_{m-1}} \arrow[r, "\bar{\Phi}^{-1}"] \arrow[d, "\bar{f}_{k, m}"] &   \mathcal{Z}_{\Delta^{k}_{m-1}} \arrow[d, "g_{k, m}"]\\
 &\mathcal{Z}_{\Delta^{k}_{m}} \arrow[r, "q_{k, m}"] &  \mathcal{Z}_{\Delta^{k}_{m}}/S_d^1
\end{tikzcd}
\end{equation}
where $\bar{\Phi}^{-1}$ is induced by the map  $S^1\times \mathcal{Z}_{\Delta^{k}_{m-1}}\overset{\Phi^{-1}}\longrightarrow \mathcal{Z}_{\Delta^{k}_{m-1}}$ given by $\Phi^{-1}(t, \mathbf{z})=t^{-1}\cdot\mathbf{z}$.
\end{lemma}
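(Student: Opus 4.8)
The plan is to establish the square first over the product $S^1\times\mathcal{Z}_{\Delta^{k}_{m-1}}$, where it commutes essentially by inspection, and then to descend it along the pinch map $q\colon S^1\times\mathcal{Z}_{\Delta^{k}_{m-1}}\to S^1\ltimes\mathcal{Z}_{\Delta^{k}_{m-1}}$ collapsing $S^1\times\{*\}$.

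First I would check that the square
\[
\begin{tikzcd}
S^1\times\mathcal{Z}_{\Delta^{k}_{m-1}} \arrow[r, "\Phi^{-1}"] \arrow[d, "f_{k,m}"'] & \mathcal{Z}_{\Delta^{k}_{m-1}} \arrow[d, "g_{k,m}"]\\
\mathcal{Z}_{\Delta^{k}_{m}} \arrow[r, "q_{k,m}"'] & \mathcal{Z}_{\Delta^{k}_{m}}/S^1_d
\end{tikzcd}
\]
commutes once the target of the left vertical map, $\mathcal{Z}_{\overline{\Delta^{k}_{m-1}}}/S^1_d$, is identified with $\mathcal{Z}_{\Delta^{k}_{m-1}}$ via Lemma~\ref{gost}. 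Writing a point of $\mathcal{Z}_{\overline{\Delta^{k}_{m-1}}}=S^1\times\mathcal{Z}_{\Delta^{k}_{m-1}}$ as $(t,(z_2,\dots,z_m))$ with the first coordinate the ghost-vertex circle, the map $f_{k,m}$ is the inclusion $(t,\mathbf{z})\mapsto(t,z_2,\dots,z_m)$, the map $\Phi^{-1}$ is $(t,\mathbf{z})\mapsto t^{-1}\cdot\mathbf{z}=(t^{-1}z_2,\dots,t^{-1}z_m)$, and $g_{k,m}$ becomes $\mathbf{z}\mapsto[(1,z_2,\dots,z_m)]$. Both composites then send $(t,\mathbf{z})$ to the $S^1_d$-orbit of $(t,z_2,\dots,z_m)=t\cdot(1,t^{-1}z_2,\dots,t^{-1}z_m)$, so the square commutes on the nose. (Equivalently, it is a side face of the homotopy-commutative cube produced by Lemma~\ref{cubeq} for the simplicial pushout at the vertex $1$.)

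Next I would use that both $\bar{f}_{k,m}$, introduced in~(\ref{pushout}), and $\bar{\Phi}^{-1}$ arise from $f_{k,m}$ and $\Phi^{-1}$ by collapsing $S^1\times\{*\}$: the restrictions $f_{k,m}|_{S^1\times *}$ and $\Phi^{-1}|_{S^1\times *}$ are loops lying in the coordinate tori $T^{m}\subset\mathcal{Z}_{\Delta^{k}_{m}}$ and $T^{m-1}\subset\mathcal{Z}_{\Delta^{k}_{m-1}}$, which are null-homotopic since both moment-angle complexes are simply connected for $0\le k\le m-2$; hence (as in Lemma~\ref{quotientlemma}(c)) the two maps extend over $S^1\ltimes\mathcal{Z}_{\Delta^{k}_{m-1}}$, so that $\bar{f}_{k,m}\circ q\simeq f_{k,m}$ and $\bar{\Phi}^{-1}\circ q\simeq\Phi^{-1}$. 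Combined with the previous step this yields
\[
q_{k,m}\circ\bar{f}_{k,m}\circ q\ \simeq\ q_{k,m}\circ f_{k,m}\ \simeq\ g_{k,m}\circ\Phi^{-1}\ \simeq\ g_{k,m}\circ\bar{\Phi}^{-1}\circ q .
\]

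It then remains to cancel $q$. Since the inclusion $S^1\times\{*\}\hookrightarrow S^1\times\mathcal{Z}_{\Delta^{k}_{m-1}}$ is split by the first projection, the connecting map $\partial\colon S^1\ltimes\mathcal{Z}_{\Delta^{k}_{m-1}}\to\Sigma(S^1\times\{*\})=S^{2}$ in the corresponding Puppe cofibre sequence is null-homotopic, and therefore $q^{*}\colon[\,S^1\ltimes\mathcal{Z}_{\Delta^{k}_{m-1}},Z\,]\to[\,S^1\times\mathcal{Z}_{\Delta^{k}_{m-1}},Z\,]$ is injective for every $Z$. Applying this with $Z=\mathcal{Z}_{\Delta^{k}_{m}}/S^1_d$ turns the displayed homotopy into $q_{k,m}\circ\bar{f}_{k,m}\simeq g_{k,m}\circ\bar{\Phi}^{-1}$, which is exactly the asserted square. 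I expect this final cancellation to be the only delicate point: in general two extensions of a map over the half-smash differ by a class pulled back along $\partial$ from $\pi_{2}$ of the target, and here $\pi_{2}(\mathcal{Z}_{\Delta^{k}_{m}}/S^1_d)\cong\Z$ is non-trivial, so it is essential that $\partial$ itself be null-homotopic. The remaining verifications — the commutativity over the product and the existence of the two extensions — are routine.
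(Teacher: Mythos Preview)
Your proof is correct and follows essentially the same route as the paper: establish the square over $S^1\times\mathcal{Z}_{\Delta^{k}_{m-1}}$ using the explicit homeomorphism $\eta$ of Lemma~\ref{gost}, then descend to the half-smash via the null-homotopy of $\Phi^{-1}|_{S^1\times *}$. Your treatment of the descent is in fact more careful than the paper's---where the paper simply asserts the passage to $S^1\ltimes\mathcal{Z}_{\Delta^{k}_{m-1}}$, you justify it by showing that the Puppe connecting map $\partial$ vanishes (using the retraction $p_1$ of $S^1\hookrightarrow S^1\times\mathcal{Z}_{\Delta^{k}_{m-1}}$), so that $q^*$ is injective and the two extensions must agree.
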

\begin{proof}
The simplicial inclusion $\Delta_{m-1}^k\longrightarrow \Delta_m^k$ gives rise to a commutative diagram
\[
\begin{tikzcd}
S^1\times \mz_{\Delta_{m-1}^k} \arrow[r, "\alpha"] \arrow[d, "f_{k,m}"]& S^1\times_{S_d^1} \mz_{\Delta_{m-1}^k}\arrow[d, "\beta"]\\
\mz_{\Delta_{m}^k}\arrow[r, "q_{k,m}"]&\mz_{\Delta_{m}^k}/S_d^1
\end{tikzcd}
\]
where the horizontal maps $\alpha$ and $q_{k,m}$ are quotient maps and $\beta$ is a map between quotient spaces induced by $f_{k,m}$.
By Lemma~\ref{gost}, 
there is a homotopy equivalence
\[
S^1\times_{S_d^1}  \mz_{\Delta_{m-1}^k}\overset{\eta}\simeq \mz_{\Delta_{m-1}^k}
\]
where  $\eta$ sends $[(t, \mathbf{z})]$ to $\Phi^{-1}(t, \mathbf{z})$. 
It follows easily  that $\eta\circ \alpha (t,\mathbf{z})=t^{-1}\cdot \mathbf{z}=\Phi^{-1}(t, \mathbf{z})$. Thus,
 replacing  $S^1\times_{S^1} \mz_{\Delta_{m-1}^k}$ by its homotopy equivalent space $\mz_{\Delta_{m-1}^k}$ due to  ${\eta}$,
there is a homotopy commutative diagram,
\[
\begin{tikzcd}
S^1\times \mz_{\Delta_{m-1}^k} \arrow[r, "\Phi^{-1}"] \arrow[d, "f_{k,m}"]& \mz_{\Delta_{m-1}^k}\arrow[d, "\beta\circ \eta"]\\
\mz_{\Delta_{m}^k}\arrow[r, "q_{k,m}"]&\mz_{\Delta_{m}^k}/S_d^1
\end{tikzcd}
\] where $\beta\circ \eta$ coincides the map $g_{k,m}$ in the diagram~(\ref{tpoutk}), since they are the maps induced by $f_{k,m}$ after we have chosen an certain homotopy type of quotient spaces.

Since the restriction of $\Phi^{-1}$ to the first coordinate $S^1$ is null homotopic, we obtain the homotopy commutative diagram in the statement.
\end{proof}
The homotopy commutative diagram~(\ref{cd}) gives rise to the following homotopy commutative diagram 
\begin{equation}\label{keycd}
\begin{tikzcd}
&S^1\ltimes \mathcal{Z}_{\Delta^{k}_{m-1}}
 \arrow[r, "\bar{\Phi}^{-1}"]\arrow[d, "\bar{f}_{k,m}"] 
 &\mathcal{Z}_{\Delta^{k}_{m-1}}
  \arrow[r] \arrow[d, "g_{k,m}"] 
  &S^1*\mathcal{Z}_{\Delta^{k}_{m-1}} 
  \arrow[d] \\
& \mathcal{Z}_{\Delta^{k}_{m}} 
\arrow[r, "q_{k,m}"] \arrow[d, "r_{k,m}"] 
& \mathcal{Z}_{\Delta^{k}_{m}} /S_d^1 
\arrow[r]\arrow[d, "r^{\prime}_{k,m}"] 
& C_{k, m}
\arrow[d] \\
&C_{\bar{f}_{k,m}}
 \arrow[r, "\phi_{k, m}"] 
 & C_{g_{k, m}} 
 \arrow[r]
  & Q_{k, m}
\end{tikzcd}
\end{equation}
where each row and column is a homotopy cofibration and the first row is due to Lemma~\ref{quotientlemma}(c).
The homotopy pushouts~(\ref{pushout}) and (\ref{pushout2}) imply that $C_{\bar{f}_{k,m}}\simeq S^1*\mz_{\Delta_{m-1}^{k-1}}$ and $C_{g_{k,m}}\simeq C_{k-1,m-1}$ and the first and second columns of (\ref{keycd}) are splitting homotopy cofibrations. 

 We will determine the homotopy type of $C_{k,m}$. The idea is to find simplicial complexes $L^{k}_{j,m}$ such that their quotient spaces under diagonal actions give the homotopy type of the cofibre of the quotient map.
We firstly identify the homotopy type of maps $\phi_{k,m}$.

 \begin{lemma} \label{phiid}
 Let $K=\Delta^k_m$ and $L_{1,m}^k=K\cup \Delta_{\{1, 2, \ldots, m-1\}}$. Then $C_{\bar{f}_{k,m}}\simeq \mz_{L_{1,m}^k}$
  and $\mz_{L_{1,m}^k}/ S_d^1\simeq C_{g_{k,m}}$. 
  Under these homotopy equivalences,
  the map $\phi_{k,m}$ can be taken the quotient map $\mz_{L_{1,m}^k}\longrightarrow \mz_{L_{1,m}^k}/S^1_d$.
 \end{lemma}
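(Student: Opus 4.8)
The plan is to realize the map $\bar f_{k,m}\colon S^1\ltimes \mathcal{Z}_{\Delta^k_{m-1}}\to\mathcal{Z}_{\Delta^k_m}$ as the half-smash projection coming from the action map, so that its cofibre is computed via Lemma~\ref{quotientlemma}(c) applied inside a larger moment-angle complex, and then to identify that larger complex combinatorially as $\mz_{L_{1,m}^k}$. First I would observe that the simplex $\Delta_{\{1,\ldots,m-1\}}$ (the full simplex on the first $m-1$ vertices) glued to $K=\Delta_m^k$ along $\Delta^k_{m-1}$ gives $L_{1,m}^k$, and that there is a pushout of simplicial complexes
\[
\begin{tikzcd}
\Delta^k_{m-1}\arrow[r]\arrow[d] & \Delta_{\{1,\ldots,m-1\}}\arrow[d]\\
\Delta^k_m\arrow[r] & L_{1,m}^k
\end{tikzcd}
\]
since $\Delta^k_m\cap\Delta_{\{1,\ldots,m-1\}}=\Delta^k_{m-1}$. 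Feeding this into Lemma~\ref{cubeq} (with $H=S^1_d$, which acts freely on $\mz_{L_{1,m}^k}$ because it acts freely on the larger polytope's torus and $H\cap T^\sigma$ is trivial for each face $\sigma\in L_{1,m}^k$), I get a cube of homotopy pushouts relating $\mz_{\overline{\Delta^k_{m-1}}}$, $\mz_{\overline{\Delta_{\{1,\ldots,m-1\}}}}$, $\mz_{\Delta^k_m}$, $\mz_{L_{1,m}^k}$ and their $S^1_d$-quotients.

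Next I would identify the pieces. The moment-angle complex of a full simplex on $m-1$ vertices (viewed inside the $m$-vertex complex, so with one ghost vertex $\{m\}$) is $S^1\times D^{2(m-1)}\simeq S^1$, and $\mz_{\overline{\Delta^k_{m-1}}}=S^1\times\mz_{\Delta^k_{m-1}}$ (ghost vertex $\{m\}$ as well). Under these identifications the map $\mz_{\overline{\Delta^k_{m-1}}}\to\mz_{\overline{\Delta_{\{1,\ldots,m-1\}}}}$ becomes the projection $S^1\times\mz_{\Delta^k_{m-1}}\to S^1$, while $\mz_{\overline{\Delta^k_{m-1}}}\to\mz_{\Delta^k_m}$ is, up to homotopy, the earlier map $S^1\times\mz_{\Delta^k_{m-1}}\to\mz_{\Delta^k_m}$ appearing in~\eqref{tpoutk}, namely $f_{k,m}$ composed with the inclusion $\mz_{\Delta^k_{m-1}}\hookrightarrow$ (second coordinate). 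Pinching out the $S^1$ in the top pushout exactly as in~\eqref{pushout} turns the top square into the cofibration $S^1\ltimes\mz_{\Delta^k_{m-1}}\xrightarrow{\bar f_{k,m}}\mz_{\Delta^k_m}\to C_{\bar f_{k,m}}$, and the top pushout then identifies $C_{\bar f_{k,m}}\simeq\mz_{L_{1,m}^k}$. Applying the quotient functor (the lower cube) in the same way, and using Lemma~\ref{lemzkm}-type reasoning together with the fact that $\mz_{\overline{\Delta^k_{m-1}}}/S^1_d\simeq\mz_{\Delta^k_{m-1}}$ (Lemma~\ref{gost}, since $\{m\}$ is a ghost vertex acted on with coefficient $1$), gives $\mz_{L_{1,m}^k}/S^1_d\simeq C_{g_{k,m}}$, and the naturality of the whole cube forces the quotient map $\mz_{L_{1,m}^k}\to\mz_{L_{1,m}^k}/S^1_d$ to correspond to $\phi_{k,m}\colon C_{\bar f_{k,m}}\to C_{g_{k,m}}$.

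The main obstacle will be the last sentence: verifying that under the two homotopy equivalences $C_{\bar f_{k,m}}\simeq\mz_{L_{1,m}^k}$ and $C_{g_{k,m}}\simeq\mz_{L_{1,m}^k}/S^1_d$ the induced map is genuinely (homotopic to) the quotient map and not merely some map between those spaces. I would handle this by keeping everything inside the single commuting cube of Lemma~\ref{cubeq}: the vertical maps there are by construction the quotient maps $\mz_{(-)}\to\mz_{(-)}/S^1_d$, and the cofibre constructions in~\eqref{pushout} and~\eqref{pushout2} are performed by pinching the \emph{same} $S^1$ factor in the top and bottom faces compatibly with those vertical maps; hence the map on cofibres is the one induced by the vertical quotient maps, i.e. the quotient map $\mz_{L_{1,m}^k}\to\mz_{L_{1,m}^k}/S^1_d$. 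Care is needed that the homotopy equivalence $S^1\times_{S^1_d}\mz_{\Delta^k_{m-1}}\simeq\mz_{\Delta^k_{m-1}}$ used to trivialize the free action (as in the previous lemma's $\bar\Phi^{-1}$) is the \emph{same} trivialization on both the source of $\phi_{k,m}$ and wherever it reappears, so the diagram commutes on the nose up to homotopy; this is exactly the compatibility already recorded in diagram~\eqref{cd}, which I would cite rather than re-prove.
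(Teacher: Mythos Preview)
Your proposal is correct and follows essentially the same route as the paper: form the simplicial pushout $\Delta^k_{m-1}\to\Delta_{\{1,\ldots,m-1\}}$, $\Delta^k_{m-1}\to\Delta^k_m$, apply Lemma~\ref{cubeq} to obtain the compatible pushouts of moment-angle complexes and their $S^1_d$-quotients, pinch out the $S^1$ factor to identify the cofibres as $\mz_{L_{1,m}^k}$ and $\mz_{L_{1,m}^k}/S^1_d$, and read off $\phi_{k,m}$ from the vertical quotient maps in the cube. The paper's write-up is terser on the last point, while you spell out more carefully why the cube forces the induced map on cofibres to be the quotient map; your reference to Lemma~\ref{quotientlemma}(c) in the opening sentence is a red herring (you never actually use it), and the phrase ``Lemma~\ref{lemzkm}-type reasoning'' should just be ``the bottom pushout of Lemma~\ref{cubeq}'', but neither affects the argument.
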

\begin{proof}
Since $K\cap \Delta_{\{1, 2, \ldots, m-1\}}=\Delta^k_{m-1}$, we have a pushout of simplicial complexes
\[\begin{tikzcd}
\Delta^k_{m-1}\arrow[r]\arrow[d] & \Delta_{\{1, 2, \ldots, m-1\}}\arrow[d]\\
\Delta^k_{m}\arrow[r]&L_{1,m}^k.
\end{tikzcd}
\]
There are two homotopy pushouts of topological spaces, one of moment-angle complexes and one of quotient spaces of moment-angle complexes
\[
\begin{tikzcd}
\mz_{\Delta^k_{m-1}} \times S^1 
\arrow[r, "* \times \mathrm{id}"]
\arrow[d, "f_{k,m}"'] & S^1\arrow[d]\\
\mz_{\Delta^k_{m}}\arrow[r]& \mz_{L_{1,m}^k}
\end{tikzcd}
\text{and}
\begin{tikzcd}
\mz_{\Delta^k_{m-1}}
\arrow[r]\arrow[d, "g_{k,m}"]
 & *\arrow[d]\\
\mz_{\Delta^k_{m}}/S_d^1\arrow[r]& \mz_{L_{1,m}^k}/S_d^1.
\end{tikzcd}
\]
Pinching out $S^1$ in the left pushout above, we have a homotopy cofibration
\begin{equation}\label{cofibration}
\mz_{\Delta^k_{m-1}}\rtimes S^1 \overset{\bar{f}_{k,m}}\longrightarrow \mz_{\Delta^k_{m}} \longrightarrow \mz_{L_{1,m}^k}.
\end{equation}
Taking the corresponding quotient spaces of (\ref{cofibration}) and the homotopy commutative diagram~(\ref{keycd}), there exists a homotopy commutative diagram of homotopy fibrations
 \[\begin{tikzcd}
\mz_{\Delta^k_{m-1}}\rtimes  S^1 \arrow[r, "\bar{f}_{k,m}"]\arrow[d] 
&\mz_{\Delta^k_{m}} \arrow[r] \arrow[d, "q_{k,m}"]
 & \mz_{L_{1,m}^k}\arrow[d]\\
 \mz_{\Delta^k_{m-1}} \arrow[r, "g_{k,m}"] 
 & \mz_{\Delta^k_{m}}/S_d^1 \arrow[r] 
 & \mz_{L_{1,m}^k}/S_d^1. 
 \end{tikzcd} 
 \]
 Thus the maps $\phi_{k,m}$ in (\ref{keycd}) are quotient maps up to homotopy and $C_{\bar{f}_{k,m}}\simeq \mz_{L_{1,m}^k}$
  and $\mz_{L_{1,m}^k}/ S_d^1\simeq C_{g_{k,m}} \simeq C_{k-1, m-1}$. 
\end{proof}

 We have identified the homotopy type of $C_{k-1, m-1}$ as $\mz_{L_{1,m}^k}/ S_d^1$.
 We will continue to show that the homotopy cofibre $C_{k,m}$ has the following form.
\begin{theorem}\label{ckm}
There exists a homotopy equivalence
\[
C_{k,m} \simeq
 \C P^{k+2}\vee (\underset{i=1}{\overset{k+1}\vee}S^{2i-1}*\mz_{\Delta_{m-i}^{k+1-i}})\vee (S^{2k+3}*T^{m-k-2}).
\]
\end{theorem}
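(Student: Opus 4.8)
The plan is to recognise $C_{k,m}$ as a Thom space and to compute it from a wedge decomposition of $\mz_{\Delta^k_m}/S_d^1$ supplied inductively by Lemma~\ref{lemzkm}. I would argue by induction on $k$, uniformly in $m\geq k+2$. The base case is $k=-1$: here $\mz_{\Delta^{-1}_m}=T^m$, and after a change of coordinates on the torus the quotient map $T^m\to T^m/S_d^1\cong T^{m-1}$ becomes a projection $S^1\times T^{m-1}\to T^{m-1}$, whose homotopy cofibre is $\Sigma S^1\vee(S^1*T^{m-1})=\C P^1\vee(S^1*T^{m-1})$ — this is the computation already carried out in the proof of the Corollary above, and it agrees with the asserted formula at $k=-1$.

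For the inductive step ($k\geq0$): by Lemma~\ref{lemzkm}, $\mz_{\Delta^k_m}/S_d^1\simeq\mz_{\Delta^k_{m-1}}\vee C_{k-1,m-1}$. By the inductive hypothesis $C_{k-1,m-1}$ has the stated form, and $\mz_{\Delta^k_{m-1}}$ is a wedge of spheres of dimension $\geq k+3$ by~\cite[Corollary 9.5]{GT2}, so
\[
\mz_{\Delta^k_m}/S_d^1\;\simeq\;\C P^{k+1}\vee Y_1\vee\cdots\vee Y_r,
\]
where each $Y_j$ is a wedge of spheres of dimension $\geq3$ or an iterated suspension; in particular $\widetilde H^2(Y_j;\Z)=0$ for every $j$. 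Since the diagonal circle acts freely on $\mz_{\Delta^k_m}$ (every $\sigma\in\Delta^k_m$ has $\sigma\neq[m]$ because $k\leq m-2$, hence $S_d^1\cap T^\sigma=\{\mathbf 1\}$), the quotient map $q_{k,m}$ is a principal $S^1$-bundle. Writing $L=\mz_{\Delta^k_m}\times_{S^1}\C$ for the associated complex line bundle over $\mz_{\Delta^k_m}/S_d^1$, one has $\mz_{\Delta^k_m}\cong S(L)$ and $\mz_{\Delta^k_m}/S_d^1\simeq D(L)$ (the zero section), so $q_{k,m}$ is, up to homotopy, the inclusion $S(L)\hookrightarrow D(L)$ and therefore
\[
C_{k,m}\;\simeq\;\mathrm{Th}(L)\;=\;D(L)/S(L).
\]

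Next I would pin down $L$. Its first Chern class lies in $H^2(\mz_{\Delta^k_m}/S_d^1;\Z)\cong H^2(\C P^{k+1};\Z)\cong\Z$ (the $Y_j$'s contribute nothing), and the Gysin sequence of the bundle gives $H^0\xrightarrow{\;\cup c_1(L)\;}H^2(\mz_{\Delta^k_m}/S_d^1)\to H^2(\mz_{\Delta^k_m})$ with $H^2(\mz_{\Delta^k_m})=0$ (a wedge of spheres of dimension $\geq k+3\geq3$), so $c_1(L)$ is a generator. Hence $L$ restricts on the $\C P^{k+1}$ summand to the tautological line bundle (up to conjugation) and on each $Y_j$ to the trivial bundle (as $\widetilde H^2(Y_j)=0$). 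Finally I would compute $\mathrm{Th}(L)$ over the wedge: the disk (resp.\ sphere) bundle of $L$ is the union of the disk (resp.\ sphere) bundles over the summands glued along the fibre disk $D^2$ (resp.\ fibre circle $S^1$) over the basepoint, so $\mathrm{Th}(L)$ is the union of the Thom spaces of the restrictions glued along $D^2/S^1=S^2$; for trivial $L|_{Y_j}$ one has $\mathrm{Th}(L|_{Y_j})\simeq S^2\vee\Sigma^2Y_j$ with the $S^2$ being exactly the glued-in sphere, so attaching it contributes only a wedge summand $\Sigma^2Y_j$, while $\mathrm{Th}$ of the tautological line bundle over $\C P^{k+1}$ is $\C P^{k+2}$. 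Using $\Sigma^2Z\simeq S^1*Z$ (so $\Sigma^2\mz_{\Delta^k_{m-1}}\simeq S^1*\mz_{\Delta^k_{m-1}}$, $\Sigma^2(S^{2i-1}*Z)\simeq S^{2i+1}*Z$, and $\Sigma^2(S^{2k+1}*T^{m-k-2})\simeq S^{2k+3}*T^{m-k-2}$) together with $\Delta^{k-i}_{m-1-i}=\Delta^{(k+1)-(i+1)}_{m-(i+1)}$, the outcome
\[
\C P^{k+2}\vee\Sigma^2\mz_{\Delta^k_{m-1}}\vee\Big(\bigvee_{i=1}^{k}\Sigma^2\big(S^{2i-1}*\mz_{\Delta^{k-i}_{m-1-i}}\big)\Big)\vee\Sigma^2\big(S^{2k+1}*T^{m-k-2}\big)
\]
is, after reindexing the middle wedge by $i\mapsto i+1$ and recognising the first and last extra summands as the $i=1$ and $i=k+2$ terms, precisely the claimed decomposition of $C_{k,m}$.

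The step I expect to be the main obstacle is the identification of $L$ and the bookkeeping of the Thom space of a bundle over a wedge: one must check that the free circle action genuinely produces a principal bundle (so $C_{k,m}$ is a Thom space on the nose rather than merely stably), that the induction is arranged so that the wedge form of $\mz_{\Delta^k_m}/S_d^1$ is available without circularity (the induction runs on $k$, each step using only $C_{k-1,m-1}$ and the known homotopy type of $\mz_{\Delta^k_{m-1}}$), and that $L$ restricts to a generator of $\mathrm{Pic}$ on the $\C P^{k+1}$ summand; everything after that is formal. An alternative, closer to the set-up around Lemma~\ref{phiid}, would iterate the cube-lemma construction over a chain of simplicial complexes $L^k_{j,m}$, peeling off one join summand at each stage — this also works but is considerably heavier on combinatorial bookkeeping.
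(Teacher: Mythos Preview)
Your argument is correct and genuinely different from the paper's. The paper never interprets $C_{k,m}$ as a Thom space; instead it introduces the auxiliary complexes $L^k_{j,m}$, shows via Lemma~\ref{phiid} and Proposition~\ref{hmpjkm} that $C_{k,m}\simeq \mz_{L^{k+1}_{1,m+1}}/S^1_d$, and then computes the right–hand side by an iterated cube-lemma/pushout argument (the chain $L^k_{j,m}\to L^k_{j+1,m}$ together with the null-homotopy Lemma~\ref{nullJ}). Your route replaces all of this combinatorics by three standard bundle-theoretic facts: (i) the cofibre of a free $S^1$-quotient is the Thom space of the associated line bundle; (ii) line bundles are classified by $c_1\in H^2$, which the Gysin sequence pins down here; (iii) the Thom space over a wedge decomposes as you describe. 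The induction on $k$ using Lemma~\ref{lemzkm} (which is proved independently of Theorem~\ref{ckm}) then does the rest.

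What each approach buys: the paper's $L^k_{j,m}$ method stays entirely within the polyhedral-product/pushout framework and yields, as a by-product, the homotopy types of the intermediate spaces $\mz_{L^k_{j,m}}$ and $\mz_{L^k_{j,m}}/S^1_d$; it would also adapt to other complexes built by gluing simplices onto $\Delta^k_m$. Your Thom-space argument is shorter and more conceptual, and explains \emph{why} a $\C P^{k+2}$ appears (as the Thom space of $O(\pm1)$ over the $\C P^{k+1}$ summand) and why all the other summands simply get one extra double suspension. Two points in your write-up deserve a sentence of care: the passage from the actual base $\mz_{\Delta^k_m}/S^1_d$ to its wedge model requires transporting $L$ along a homotopy equivalence (fine, since $BU(1)=K(\Z,2)$ so line bundles are homotopy invariants), and the ``Thom space over a wedge'' step is really the statement that cofibres commute with the pushout $A\leftarrow \mathrm{pt}\rightarrow B$, which is immediate once phrased that way. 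Your closing remark about the $i=k+2$ term is a slip of indexing: the summand $S^{2k+3}*T^{m-k-2}$ is the separate last term in the statement, not an $i=k+2$ term of the big wedge; the reindexing $i\mapsto i+1$ already exhausts $i=1,\dots,k+1$.
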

The main idea of the proof of Theorem~\ref{ckm} is to construct a sequence of simplicial complexes $L_{j,m}^k$ and iterate to determine the homotopy types of their quotient spaces under the diagonal action. 
We  give an explicit construction of these simplicial complexes $L_{j,m}^k$ from the $k$-skeleton $\dk$.

Denote by $\Delta_{\{i_1,\ldots, i_p\}}$ a simplex on vertices $\{i_1, \ldots, i_p\}$. Let $L^k_{0,m}=\dk$. Define $L_{1,m}^k=\dk \cup \Delta_{\{1,2,\ldots,m-1\}}$ and  $L_{j,m}^k=L_{j-1,m}^k\cup \Delta_{\{1, \ldots, \widehat{m-j+1}, \ldots, m\}}$, where $\widehat{m-j+1}$ means that this vertex is omitted.

We first prove that the simplicial inclusion $L_{j,m}^{k-1}\longrightarrow L_{j,m}^k$ induces a null homotopic map on corresponding moment-angle complexes.
\begin{lemma}\label{nullJ}
For $1\leq j\leq k+1$, the inclusion $J\colon \mz_{L_{j,m}^{k-1}}\longrightarrow \mz_{L_{j,m}^k}$ is null homotopic.
\end{lemma}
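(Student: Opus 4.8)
The plan is to establish the null-homotopy by exhibiting $\mz_{L_{j,m}^k}$ as built from $\mz_{L_{j,m}^{k-1}}$ in a way that factors the inclusion $J$ through a contractible or otherwise ``trivial'' piece. The natural tool is the Bahri--Bendersky--Cohen--Gitler splitting: after one suspension, $\Sigma \mz_L \simeq \bigvee_{I \subseteq V(L)} \Sigma^{2} |K_I|$, where $|K_I|$ is the geometric realisation of the full subcomplex on $I$, and the map induced by a simplicial inclusion $L_{j,m}^{k-1} \hookrightarrow L_{j,m}^k$ respects this wedge decomposition summand by summand. So it suffices to check that for every vertex subset $I$, the induced map $|(\,L_{j,m}^{k-1}\,)_I| \longrightarrow |(\,L_{j,m}^k\,)_I|$ is null homotopic, i.e. that the target full subcomplex is either contractible or receives a null map from the source. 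Here I would use that $L_{j,m}^k$ is obtained from $\dk$ by adjoining a collection of top-dimensional simplices $\Delta_{\{1,\ldots,\widehat{m-\ell+1},\ldots,m\}}$ for $1 \le \ell \le j$, each on $m-1$ vertices; a full subcomplex of $L_{j,m}^k$ on a set $I$ either equals the corresponding full subcomplex of $\dk$ (when $I$ misses none of the adjoined simplices), or picks up one of these large faces, in which case the full subcomplex becomes a cone and hence contractible.

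Concretely, the key step is the following dichotomy on $I \subseteq [m]$. If $I$ does not contain any of the vertex sets $\{1,\ldots,\widehat{m-\ell+1},\ldots,m\}$ for $\ell \le j$ as a subset, then $(L_{j,m}^k)_I = (\dk)_I$ is the full subcomplex of the $k$-skeleton, which is $\Delta_{|I|-1}^{\min(k,\,|I|-1)}$; and likewise $(L_{j,m}^{k-1})_I = \Delta_{|I|-1}^{\min(k-1,\,|I|-1)}$. On these summands the map in question is the inclusion of one skeleton of a simplex into the next-higher skeleton, and this is null homotopic by \cite[Lemma 3.3]{GT1} (the same lemma already invoked in the excerpt for the vanishing of $\mz_{\Delta^{k-1}_{m-1}} \to \mz_{\Delta^k_{m-1}}$): a wedge of spheres of one dimension maps into a wedge of spheres of the next, and the relevant bottom cells are killed. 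If instead $I$ contains some $\{1,\ldots,\widehat{m-\ell+1},\ldots,m\}$ with $\ell \le j$, then $(L_{j,m}^k)_I$ contains a maximal face spanning all but at most one vertex of $I$, and one checks that this full subcomplex is a cone (every other vertex of $I$ can be coned off that big face inside $L_{j,m}^k$, because adding a single vertex to an $(m-1)$-set only increases cardinality to at most $m$, and $\dk \subseteq L_{j,m}^k$ supplies the needed $(\le k+1)$-faces while the adjoined simplices supply the large ones), so $|(L_{j,m}^k)_I|$ is contractible and the map is automatically null.

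I would assemble these observations as follows: first record the explicit description of $(L_{j,m}^k)_I$ and $(L_{j,m}^{k-1})_I$ in the two cases above; second, in the contractible case conclude immediately; third, in the remaining case identify the map on the BBCG wedge summand with a skeletal inclusion of simplices and apply \cite[Lemma 3.3]{GT1}; finally, invoke naturality of the BBCG splitting to conclude that $\Sigma J$ is null homotopic and hence (using that $\mz_{L_{j,m}^{k-1}}$ is a suspension, being homotopy equivalent to a wedge of spheres and joins as in the shifted-complex case, or more directly because its source is itself a co-$H$-space after the splitting) that $J$ itself is null homotopic.

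The main obstacle I anticipate is the bookkeeping in the ``cone'' case: one must verify carefully that after adjoining only the simplices $\Delta_{\{1,\ldots,\widehat{m-\ell+1},\ldots,m\}}$ for $\ell \le j \le k+1$, every full subcomplex that meets one of these large faces is genuinely a cone — this requires knowing that the extra faces needed to cone off the remaining vertices already lie in $\dk$, which is exactly where the hypothesis $j \le k+1$ enters (so that the relevant small faces have cardinality $\le k+1$). A secondary point requiring care is to make sure the BBCG splitting is applied with the correct common ambient vertex set (including ghost vertices), so that the summand-by-summand comparison of $L_{j,m}^{k-1}$ and $L_{j,m}^k$ is legitimate; this is the analogue of the ``$\overline{K}$'' conventions used earlier in the excerpt.
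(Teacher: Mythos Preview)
Your BBCG approach has two concrete errors in the full-subcomplex bookkeeping and one structural gap that cannot be repaired along these lines.

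First, the dichotomy is wrong. Take $m=7$, $k=2$, $j=1$ and $I=\{3,4,5,6,7\}$. Then $I$ does not contain $\{1,\dots,6\}$, so you place it in your first case and claim $(L_{1,7}^2)_I=(\Delta_7^2)_I$. But in fact $(L_{1,7}^2)_I=\Delta_I^2\cup\Delta_{\{3,4,5,6\}}$, and the $3$-face $\{3,4,5,6\}$ lies outside $\Delta_I^2$. The correct split is: if $I$ \emph{misses} some vertex of $\{m-j+1,\dots,m\}$ then $(L_{j,m}^k)_I$ is the full simplex on $I$; if $\{m-j+1,\dots,m\}\subseteq I$ then $(L_{j,m}^k)_I$ is a relabelled $L_{j,|I|}^k$. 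So on those summands you face the same inclusion $L_{j,n}^{k-1}\hookrightarrow L_{j,n}^k$ for $n=|I|$, not a skeletal inclusion of a simplex.

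Second, the cone claim fails for $I=[m]$. With $m=5$, $k=2$, $j=1$ one has $\{2,3,5\}\in L_{1,5}^2$ but $\{1,2,3,5\}\notin L_{1,5}^2$, so $L_{1,5}^2$ is not a cone on $1$ (and, by inspection, on no vertex); indeed $|L_{1,5}^2|\simeq\bigvee_3 S^2$.

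Third, and decisively, even a correct proof that $\Sigma J\simeq *$ does not give $J\simeq *$. Your proposed fix---that the source is a suspension or co-$H$-space---is insufficient: the Whitehead square $[\iota_2,\iota_2]=\pm 2\eta\colon S^3\to S^2$ has $\Sigma(2\eta)=0$ while $2\eta\neq 0$ in $\pi_3(S^2)=\mathbb Z$, and $S^3$ is a suspension. The BBCG splitting only sees the stable picture, whereas the lemma is an unstable statement.

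The paper's argument is entirely different and unstable. Using the filtration $\Delta_m^{k-1}\subset(1)*\Delta_{m-1}^{k-1}\subset\Delta_m^k$ (together with the observation that the full subcomplex of $L_{j,m}^{k-1}$ on $\{2,\dots,m\}$ is $L_{j,m-1}^{k-1}$), one factors $J$ up to homotopy through the retraction $\mz_{L_{j,m}^{k-1}}\to\mz_{L_{j,m-1}^{k-1}}$ followed by $\mz_{L_{j,m-1}^{k-1}}\to\mz_{L_{j,m-1}^k}\to\mz_{L_{j,m}^k}$. Iterating in $m$, after $m-k-1$ steps one reaches $\mz_{L_{j,k+1}^k}$; since $L_{j,k+1}^k$ on $k+1$ vertices contains every subset of cardinality at most $k+1$, it is the full simplex, so this space is contractible and $J$ is null.
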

\begin{proof}
Let $K=\underset{m-j+1 \leq q\leq m}\bigcup \Delta_{\{1,\ldots,\hat{q}, \ldots, m-1\}}$. Thus $\mz_{K}=(\underset{m-j}\prod D^2) \times \mz_{\partial \Delta^{j-1}}$, where $\partial \Delta^{j-1}$ is the boundary of a simplex on vertices $\{m-j+1,\ldots, m\}$. Note that $L_{j,m}^{k}=\Delta_{m}^{k} \cup K$ and $\mz_{L_{j,m}^{k}}=\mz_{\Delta^{k}_m}\cup \mz_{K}$.

First, there is a filtration of simplicial complexes $\Delta^{k-1}_m\subseteq (1)*\Delta_{m-1}^{k-1}\subseteq \Delta_{m}^k$, where $\Delta^{k-1}_{m-1}$ in the middle is on vertices $\{2,\ldots, m\}$, which implies a filtration of simplicial complexes
 $L_{j,m}^{k-1}\subseteq ((1)*\Delta_{m-1}^{k-1})\cup K \subseteq L_{j, m}^k$.
In particular, 
$((1)*\Delta_{m-1}^{k-1})\cup K =(1)*(\Delta_{m-1}^{k-1}\cup K_1)$,
 where $K_1$ is the full subcomplex of $K$ on vertices $\{2, \ldots, m\}$.
Thus, the inclusion $J$ factors through the  corresponding moment-angle complexes 
\[\mz_{L_{j,m}^{k-1}} \overset{i_1}\longrightarrow D^2\times (\mz_{\Delta_{m-1}^{k-1}}\cup \mz_{K_1}) \overset{i_1^{\prime}}\longrightarrow \mz_{L_{j, m}^k}.\]
By the construction of $L^{k}_{j, m}$, $\Delta_{m-1}^{k-1}\cup K_1=L_{j, m-1}^{k-1}$ which is a full subcomplex of $L^{k-1}_{j, m}$ on vertices $\{2, \ldots, m\}$.
Denote by $r_1$ the retraction $\mz_{L_{j,m}^{k-1}} \longrightarrow \mz_{L_{j, m-1}^{k-1}}$. Then the map $i_1$ factors through $r_1$ and a coordinate inclusion $\iota_1\colon \mz_{L_{j, m-1}^{k-1}} \longrightarrow D^2\times \mz_{L_{j, m-1}^{k-1}}$ up to homotopy. Namely, there exists a  diagram
\[
\begin{tikzcd}
& \mz_{L_{j,m}^{k-1}} \arrow[d , "i_1"] \arrow[dl, "r_1"'] \arrow[dr, "J"]& \\
\mz_{L_{j, m-1}^{k-1}}\arrow[r, "\iota_1"'] &  D^2\times \mz_{L_{j, m-1}^{k-1}} \arrow[r, "i_1^{\prime}"'] & \mz_{L_{j, m}^k} 
\end{tikzcd}
\]
where the left triangle is homotopy commutative and the right one is commutative.
In particular, the composition $i_1^{\prime}\iota_1$ coincides with the map induced by the simplicial inclusion $L_{j,m-1}^{k-1}\longrightarrow L_{j,m}^k$ which has a filtration $L_{j,m-1}^{k-1} \overset{j_2}\longrightarrow L^{k}_{j, m-1}\overset{j_2^{\prime}}\longrightarrow L^k_{j, m}$. 

 The same strategy applies for $L_{j,m-1}^{k-1} \overset{j_2}\longrightarrow L^{k}_{j, m-1}$.
Repeating the above procedure, there are diagrams for $1\leq q\leq m-k-1$
\begin{equation}\label{repeat}
\begin{tikzcd}
& \mz_{L_{j,m-q+1}^{k-1}} \arrow[d , "i_q"] \arrow[dl, "r_q"'] \arrow[dr, "j_q"]& \\
\mz_{L_{j, m-q}^{k-1}}\arrow[r, "\iota_q"'] \arrow[dr, "j_{q+1}"'] &  D^2\times \mz_{L_{j, m-q}^{k-1}} \arrow[r, "i_q^{\prime}"'] & \mz_{L_{j, m-q+1}^k} \\
& \mz_{L_{j,m-q}^k} \arrow[ur, "j_{q+1}^{\prime}"'] &
\end{tikzcd}
\end{equation}
where each $L_{j, m-q}^{k-1}$ is a full subcomplex of $L_{j, m-q+1}^{k-1}$ on vertices $\{q+1, \ldots, m\}$, the top left triangle is homotopy commutative and the other two are commutative.

If $q=m-k-1$, observe the composition $\mz_{L_{j, k+1}^{k-1}}\overset{j_{m-k}}\longrightarrow \mz_{L_{j, k+1}^{k}} \overset{j_{m-k}^{\prime}}\longrightarrow \mz_{L_{j, k+2}^{k}}$. Since  $L_{j, k+1}^{k}$ is a full subcomplex of $L_{j, k+2}^{k}$ on vertices $\{m-k, \ldots, m\}$, it contains all subsets of $\{m-k, \ldots, m\}$ with cardinality at most $k+1$. Thus  $L_{j, k+1}^{k}$ is a simplex, which means that $j_{m-k}$ is null homotopic. Chasing the homotopy commutative diagram~(\ref{repeat}), $j_{m-k}$ is a factor of $J$ up to homotopy. Hence, $J$ is null homotopic.
\end{proof}

\begin{proposition}\label{hmpjkm}
There exist  homotopy equivalences
$\mz_{L_{j,m}^k}\simeq  S^1* \mz_{L_{j-1,m-1}^{k-1}}$
and $\mz_{L_{j,m}^k}/S^1_d\simeq C_{q^{k-1}_{j-1, m-1}}$,
where $C_{q^{k}_{j, m}}$ denotes the homotopy cofibre of the    quotient map $\mz_{L_{j,m}^k} \overset{q_{j,m}^k}\longrightarrow \mz_{L_{j,m}^k}/S^1_d$.
Consequently, we have the homotopy types of the following spaces
 \[
\mz_{L_{j,m}^k}\simeq \begin{cases}
 S^{2j-1}* \mz_{\Delta_{m-j}^{k-j}}~&\text{if}~1\leq j\leq k+1\\
 S^{2k+3}~&\text{if}~j=k+2\end{cases}\]
and
\[
\mz_{L_{j,m}^k}/S_d^1\simeq
 \begin{cases}
 \C P^{k+1} \vee (\underset{i=j}{\overset{k}\vee}S^{2i-1}*\mz_{\Delta_{m-i-1}^{k-i}})\vee (S^{2k+1}*T^{m-k-2})~&\text{if}~1\leq j\leq k+1\\
 \C P^{k+1}~&\text{if}~j=k+2.
 \end{cases}
\]
\end{proposition}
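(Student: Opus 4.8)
The plan is to realise $L_{j,m}^k$ as the amalgam of the link--rest--star pushout at the vertex $v=m-j+1$ and to run it through the cube of Lemma~\ref{cubeq}. Write $W=[m]\setminus\{v\}$. I would first record the combinatorial facts, valid for $1\le j\le k+2$: since $L_{j,m}^k$ contains the facet $\Delta_{[m]\setminus\{v\}}$, every subset of $W$ is a face, so $\rt_{L_{j,m}^k}(v)=\Delta_W$ is the full simplex on $W$; one has $\st_{L_{j,m}^k}(v)=(v)*\lk_{L_{j,m}^k}(v)$ automatically; and adjoining the facet $\Delta_{[m]\setminus\{v\}}$ to $L_{j-1,m}^k$ does not affect faces containing $v$, so $\lk_{L_{j,m}^k}(v)=\lk_{L_{j-1,m}^k}(v)=\Delta_W^{k-1}\cup\bigcup_{q=m-j+2}^{m}\Delta_{W\setminus\{q\}}$, which under the order-preserving relabelling $W\cong\{1,\dots,m-1\}$ is exactly the $(k-1)$-skeleton together with the $j-1$ facets defining $L_{j-1,m-1}^{k-1}$.

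Feeding the simplicial pushout $\st_{L_{j,m}^k}(v)\hookleftarrow\lk_{L_{j,m}^k}(v)\hookrightarrow\rt_{L_{j,m}^k}(v)$, with amalgam $L_{j,m}^k$, into Lemma~\ref{cubeq} -- whose hypothesis $S^1_d\cap T^\sigma=\{\mathbf 1\}$ holds because $[m]\notin L_{j,m}^k$ -- produces a commutative cube whose top face is a homotopy pushout of moment-angle complexes and whose bottom face is a homotopy pushout of their quotients by $S^1_d$. On the top face $v$ is a ghost vertex of $\lk_{L_{j,m}^k}(v)$ and of $\rt_{L_{j,m}^k}(v)$, so $\mz_{\overline{\rt(v)}}=S^1\times(D^2)^{m-1}\simeq S^1$, $\ \mz_{\overline{\lk(v)}}=S^1\times\mz_{L_{j-1,m-1}^{k-1}}$ and $\mz_{\overline{\st(v)}}=D^2\times\mz_{L_{j-1,m-1}^{k-1}}\simeq\mz_{L_{j-1,m-1}^{k-1}}$; after contracting the $D^2$-factors, the two maps out of $\mz_{\overline{\lk(v)}}$ become the two coordinate projections of $S^1\times\mz_{L_{j-1,m-1}^{k-1}}$, so the top pushout is the join $S^1*\mz_{L_{j-1,m-1}^{k-1}}$ -- the first asserted equivalence.

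For the bottom face I would apply Lemma~\ref{gost} to the ghost vertex $v$, whose weight in the diagonal circle is $1$: this gives $\mz_{\overline{\rt(v)}}/S^1_d\simeq\mz_{\Delta_W}\simeq *$ and $\mz_{\overline{\lk(v)}}/S^1_d\simeq\mz_{L_{j-1,m-1}^{k-1}}$, while the Borel-construction argument of Theorem~\ref{circlet}(a), contracting the $D^2$-factor of $\mz_{\st(v)}=D^2\times\mz_{\lk(v)}$, gives $\mz_{\overline{\st(v)}}/S^1_d\simeq\mz_{L_{j-1,m-1}^{k-1}}/S^1_d$. Hence the bottom pushout is $*\leftarrow\mz_{L_{j-1,m-1}^{k-1}}\to\mz_{L_{j-1,m-1}^{k-1}}/S^1_d$, and once its right-hand leg is identified with the quotient map $q_{j-1,m-1}^{k-1}$ up to homotopy, the pushout is the homotopy cofibre $C_{q^{k-1}_{j-1,m-1}}$, giving the second asserted equivalence. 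This identification of the induced map on quotients is the main obstacle: it requires tracking the residual circle action through the ghost-to-genuine vertex identification and the Borel collapse of the $D^2$-factor, i.e.\ rerunning for $L_{j-1,m-1}^{k-1}$ the matrix and fibration-diagram bookkeeping ($\Lambda$, $M$, and \eqref{fibre1}--\eqref{fibre2}) from the proof of Theorem~\ref{circlet}(b); the rest is formal.

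The ``consequently'' statement follows by iterating the two recursions. Iterating $\mz_{L_{j,m}^k}\simeq S^1*\mz_{L_{j-1,m-1}^{k-1}}$, and using $S^1*S^a\simeq S^{a+2}$ together with $\mz_{\Delta_n^{-1}}=T^n$, after $j$ steps one reaches $S^{2j-1}*\mz_{\Delta_{m-j}^{k-j}}$ when $1\le j\le k+1$ (the surviving factor being $T^{m-k-1}$ at $j=k+1$); for $j=k+2$ one further step gives $\mz_{L_{k+2,m}^k}\simeq S^1*\mz_{L_{k+1,m-1}^{k-1}}\simeq S^1*S^{2k+1}=S^{2k+3}$. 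Iterating $\mz_{L_{j,m}^k}/S^1_d\simeq C_{q^{k-1}_{j-1,m-1}}$ and substituting the homotopy types just found together with the low-degree data ($T^n/S^1_d=T^{n-1}$, $\C P^0=*$, $\C P^1=S^2$), the only non-formal homotopy cofibre that occurs is the Hopf cofibration $S^{2\ell+1}\to\C P^\ell\to\C P^{\ell+1}$; it is applicable because the iterated-join identification is $S^1_d$-equivariant with $S^1_d$ rotating each $S^1$-factor, so $S^1_d$ restricts to the Hopf action on the sphere summand $\mz_{L_{k+1,m-1}^{k-1}}\simeq S^{2k+1}$ with quotient $\C P^k$. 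This cofibration contributes the $\C P^{k+1}$ summand, and all remaining joins of spheres and tori carry along unchanged, giving the second list.
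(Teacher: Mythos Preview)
Your derivation of the two basic recursions $\mz_{L_{j,m}^k}\simeq S^1*\mz_{L_{j-1,m-1}^{k-1}}$ and $\mz_{L_{j,m}^k}/S^1_d\simeq C_{q^{k-1}_{j-1,m-1}}$ is correct and matches the paper's argument (the paper uses the vertex $v=m$ rather than $v=m-j+1$, but your combinatorial identifications are right and the two choices give the same answer).

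The gap is in the ``consequently'' part. Your plan is to iterate $\mz_{L_{j,m}^k}/S^1_d\simeq C_{q^{k-1}_{j-1,m-1}}$ down in $k$, but each step replaces the unknown quotient by an unknown \emph{cofibre of a quotient map}, and a cofibre is not determined by the homotopy types of source and target. Your assertion that ``the only non-formal homotopy cofibre that occurs is the Hopf cofibration'' is only substantiated for the terminal step $j=k+2$ (where the space really is $S^{2k+3}$ with the Hopf action). Already at $j=k+1$ one must compute the cofibre of $q^{k-1}_{k,m-1}\colon S^{2k-1}*T^{m-k-1}\to \C P^{k}\vee(S^{2k-1}*T^{m-k-2})$, and nothing you have written pins down this map or explains why the join summands ``carry along unchanged''. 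Tracking the equivariance through iterated joins and wedge splittings is exactly the hard part, and you have not done it.

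The paper avoids this problem entirely by introducing a \emph{second} link--rest decomposition, this time at the vertex $1$, where $\lk_{L_{j,m}^k}(1)=L_{j,m-1}^{k-1}$ and $\rt_{L_{j,m}^k}(1)=L_{j,m-1}^{k}$. Lemma~\ref{nullJ} shows the induced map $\mz_{L_{j,m-1}^{k-1}}\to\mz_{L_{j,m-1}^{k}}$ is null homotopic, so the resulting cofibrations split, yielding the wedge recursion
\[
\mz_{L_{j,m}^k}/S^1_d\;\simeq\;\mz_{L_{j,m-1}^k}\vee\bigl(\mz_{L_{j+1,m}^k}/S^1_d\bigr).
\]
This increases $j$ at fixed $k$, and iterating until $j=k+2$ reduces everything to the single quotient $\mz_{L_{k+2,m}^k}/S^1_d$, which is computed directly as $\C P^{k+1}$ via the Borel construction (since $\mz_{L_{k+2,m}^k}\cong(D^2)^{m-k-2}\times\mz_{\partial\Delta^{k+1}}$). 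No cofibre of a quotient map ever has to be identified along the way; the splitting from Lemma~\ref{nullJ} is the missing ingredient in your argument.
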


\begin{proof}
If $1\leq j\leq k+1$, observe that $\lk_{L_{j,m}^k}(m)=L_{j-1,m-1}^{k-1}$ and $\rt_{L_{j,m}^k}(m)=\Delta_{\{1, \ldots, m-1\}}$. We have two homotopy pushouts of corresponding moment-angle complexes and their quotient spaces under the diagonal action
\[
\begin{tikzcd}
\mz_{L_{j-1,m-1}^{k-1}} \times S^1 
\arrow[r, "* \times \mathrm{id}"]
\arrow[d, "\mathrm{id} \times *"']
 & S^1\arrow[d]\\
\mz_{L_{j-1,m-1}^{k-1}}
\arrow[r]& \mz_{L_{j,m}^k}
\end{tikzcd}
\text{and}
\begin{tikzcd}
\mz_{L_{j-1,m-1}^{k-1}}
\arrow[r]\arrow[d, "q_{j-1,m-1}^{k-1}"]
 & *\arrow[d]\\
\mz_{L_{j-1,m-1}^{k-1}}/S_d^1\arrow[r]& \mz_{L_{j,m}^k}/S_d^1.
\end{tikzcd}
\]
Thus $\mz_{L_{j,m}^k}\simeq  S^1* \mz_{L_{j-1,m-1}^{k-1}}$
and $\mz_{L_{j,m}^k}/S^1_d\simeq C_{q^{k-1}_{j-1, m-1}}$.
Iterating  $\mz_{L_{j,m}^k}\simeq  S^1* \mz_{L_{j-1,m-1}^{k-1}}$, we obtain the homotopy equivalences $\mz_{L_{j,m}^k}\simeq S^{2j-1}* \mz_{\Delta_{m-j}^{k-j}}$ for $1\leq j\leq k+1$.

Next  consider that $\lk_{L_{j,m}^k}(1)=L_{j,m-1}^{k-1}$ and $\rt_{L_{j,m}^k}(1)=L_{j,m-1}^k$. Consider the homotopy pushouts of corresponding moment-angle complexes and their quotient spaces under the diagonal action
\[
\begin{tikzcd}
S^1  \times \mz_{L_{j,m-1}^{k-1}}
\arrow[r, "\mathrm{id} \times *"]
\arrow[d, " * \times \mathrm{id}"']
 & S^1 \times \mz_{L_{j,m-1}^k}\arrow[d, "f^k_{j,m}"]\\
\mz_{L_{j,m-1}^{k-1}}
\arrow[r]& \mz_{L_{j,m}^k}
\end{tikzcd}
\text{and}
\begin{tikzcd}
\mz_{L_{j,m-1}^{k-1}}
\arrow[r, "\simeq *"]\arrow[d, "q_{j,m-1}^{k-1}"]
 & \mz_{L_{j,m-1}^{k}}
 \arrow[d, "g_{j,m}^k"]\\
\mz_{L_{j,m-1}^{k-1}}/S_d^1
\arrow[r]
& \mz_{L_{j,m}^k}/S_d^1.
\end{tikzcd}
\]
By Lemma~\ref{nullJ}, the simplicial inclusion $\lk_{L_{j,m}^k}(1)\longrightarrow \rt_{L_{j,m}^k}(1)$ induces a null homotopic map on corresponding moment-angle complexes.
Thus, there are two splitting homotopy cofibrations
\[\begin{split}
S^1\ltimes \mz_{L_{j,m-1}^k} &\overset{\bar{f}^k_{j,m}}\longrightarrow \mz_{L_{j,m}^k} \longrightarrow S^1*\mz_{L_{j,m-1}^{k-1}}
 \\
\mz_{L_{j,m-1}^{k}} &\overset{g^k_{j,m}}\longrightarrow 
\mz_{L_{j,m}^{k}}/S_d^1 \longrightarrow C_{q_{j,m-1}^{k-1}}.
\end{split}
\]
Thus, there are homotopy equivalences 
\[
\begin{split}
\mz_{L_{j,m}^k}\simeq S^1*\mz_{L_{j,m-1}^{k-1}}\vee S^1\ltimes \mz_{L_{j,m-1}^k}
 & ~~\text{and}~ ~
 C_{\bar{f}^k_{j,m}}
  \simeq S^1*\mz_{L_{j,m-1}^{k-1}} 
  \simeq \mz_{L_{j+1,m}^{k}}
\\
\mz_{L_{j,m}^{k}}/S_d^1 \simeq \mz_{L_{j,m-1}^{k}} \vee C_{q_{j,m-1}^{k-1}}
& ~~\text{and}~~
 C_{g^k_{j,m}}
 \simeq C_{q_{j,m-1}^{k-1}}
  \simeq  \mz_{L_{j+1,m}^{k}}/S^1_d.
\end{split} 
\] 
Iterating the homotopy equivalence 
$\mz_{L_{j,m}^{k}}/S_d^1 
\simeq \mz_{L_{j,m-1}^{k}} \vee C_{q_{j,m-1}^{k-1}}
\simeq \mz_{L_{j,m-1}^{k}} \vee (\mz_{L_{j+1,m}^{k}}/S^1_d)$,
we have
\begin{equation}\label{zklq}
\mz_{L_{j,m}^{k}}/S_d^1 \simeq \mz_{L_{j,m-1}^{k}}\vee \mz_{L_{j+1,m-1}^{k}}\vee \ldots \vee \mz_{L_{k+1,m-1}^{k}}\vee (\mz_{L_{k+2,m}^{k}}/S^1_d).
\end{equation}

In the end, we identify the homotopy type of $\mz_{L_{k+2,m}^{k}}/S^1_d$.

If $k=0$, then $L_{2,m}^0=\Delta_{\{1,\ldots, m-1\}}\cup \Delta_{\{1,\ldots, m-2, m\}}$, where two $(m-2)$-simplices are glued together along one common facet $\Delta_{\{1, \ldots, m-2\}}$.
In this case, we have
  \[
  \mz_{L_{2,m}^0} = (\underset{m-2}\prod D^2) \times  (D^2, S^1)^{\partial \Delta^1} \simeq S^1*S^1.
  \] 
 Since the diagonal action on $\zk$ is free, the genuine quotient space has the same homotopy type as its homotopy quotient.
 Hence, there is a homotopy equivalence  
\[
\scalemath{0.9}{
\mz_{L_{2,m}^0}/S_d^1\simeq ES^1 \times_{S^1_d} \mz_{L_{2,m}^0} 
=ES^1 \times_{S_d^1} ((\underset{m-2}\prod D^2) \times  (D^2, S^1)^{\partial \Delta^1})\simeq 
ES^1 \times_{S_d^1} (D^2, S^1)^{\partial \Delta^1}
 \simeq  \C P^1}. 
 \] 
In general, the simplicial complex $L_{k+2, m}^{k}=\underset{j=m-k-1}{\overset{m}\bigcup}\Delta_{\{1,\ldots,\hat{j}, \ldots, m\}}$, where $k+2$ simplices of dimension $m-2$ (the ``first" $k+2$ facets of $\Delta^{m-1}$) are glued along the common face $\Delta_{\{1, \ldots, m-k-2\}}$.
Thus, $\mz_{L_{k+2,m}^{k}}=(\underset{m-k-2}\prod D^2) \times (D^2, S^1)^{\partial \Delta^{k+1}}$.
 The diagonal action on $\mz_{L_{k+2,m}^{k}}$ implies that the genuine quotient space has the same homotopy type with its homotopy quotient. Hence, we have
 \[
  \scalemath{0.85}{
 \mz_{L_{k+2,m}^{k}}/S_d^1\simeq ES^1\times_{S^1_d} \mz_{L_{k+2,m}^{k}}
  = ES^1 \times_{S^1_d} ((\underset{m-k-2}\prod D^2) \times (D^2, S^1)^{\partial \Delta^{k+1}}) 
   \simeq (D^2, S^1)^{\partial \Delta^{k+1}}/S^1_d \simeq \C P^{k+1}.
 }
 \]
By (\ref{zklq}), there is a homotopy equivalence
\[
\begin{split}
\mz_{L_{j,m}^{k}}/S_d^1 &\simeq  \C P^{k+1} \vee \mz_{L_{j,m-1}^{k}}\vee \mz_{L_{j+1,m-1}^{k}}\vee \ldots \vee \mz_{L_{k+1,m-1}^{k}} \\
&\simeq \C P^{k+1} \vee (S^{2j-1}* \mz_{\Delta_{m-j-1}^{k-j}}) \vee (S^{2j+1}*\mz_{\Delta_{m-j-2}^{k-j-1}})\vee \ldots \vee (S^{2k+1}*T^{m-k-2})\\
&\simeq \C P^{k+1} \vee (\underset{i=j}{\overset{k}\vee}S^{2i-1}*\mz_{\Delta_{m-i-1}^{k-i}})\vee (S^{2k+1}*T^{m-k-2}).
\end{split}
\]
\end{proof}

Now we  prove Theorem~\ref{ckm}.

\noindent \textit{Proof of Theorem~\ref{ckm}.}
The homotopy commutative diagram~(\ref{keycd}) shows that $C_{k,m}\simeq \mz_{L_{1, m+1}^{k+1}}/S^1_d$. 
 By Proposition~\ref{hmpjkm}, 
\[
C_{k,m} \simeq \C P^{k+2}\vee (\underset{i=1}{\overset{k+1}\vee}S^{2i-1}*\mz_{\Delta_{m-i}^{k+1-i}})\vee
 (S^{2k+3}*T^{m-k-2}).
\]
\qed

Together with Lemma~\ref{lemzkm}, we have the homotopy type of $\mz_{\Delta^k_{m}}/S_d^1$.
\begin{corollary}\label{zkkm}
The homotopy type of $\mz_{\Delta^k_{m}}/S_d^1$ is $\mz_{\Delta^k_{m-1}}\vee C_{k-1, m-1}$.
\end{corollary}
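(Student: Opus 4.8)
The corollary is essentially a repackaging of Lemma~\ref{lemzkm} in light of Theorem~\ref{ckm}, so the plan is short. First I would read off the right-hand square of the iterated homotopy pushout~\eqref{pushout2}: three of its corners are $*$, $\mz_{\Delta^k_{m-1}}$ and $C_{k-1,m-1}$, while the fourth is $\mz_{\Delta^k_m}/S^1_d$, so the homotopy pushout exhibits $\mz_{\Delta^k_m}/S^1_d$ as the wedge $\mz_{\Delta^k_{m-1}}\vee C_{k-1,m-1}$, where $C_{k-1,m-1}$ is the homotopy cofibre of the quotient map $q_{k-1,m-1}\colon\mz_{\Delta^{k-1}_{m-1}}\longrightarrow\mz_{\Delta^{k-1}_{m-1}}/S^1_d$ (the left-hand square of~\eqref{pushout2} is what identifies this corner as that cofibre). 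This is already the content of Lemma~\ref{lemzkm}; the point of stating it again is that $C_{k-1,m-1}$ has now been determined.

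Second, to turn this into the explicit closed form I would substitute Theorem~\ref{ckm} with the indices shifted by one, $(k,m)\mapsto(k-1,m-1)$, which gives
\[
C_{k-1,m-1}\simeq\C P^{k+1}\vee\Big(\underset{i=1}{\overset{k}\vee}S^{2i-1}*\mz_{\Delta_{m-i-1}^{k-i}}\Big)\vee\big(S^{2k+1}*T^{m-k-2}\big),
\]
and plug this into the wedge decomposition from the first step. The only thing that requires attention is the bookkeeping of ranks and dimensions together with the degenerate cases: when $k=0$ the inner wedge is empty, and when $k=m-2$ every summand other than $\C P^{m-1}$ collapses, recovering $\mz_{\Delta^{m-2}_m}/S^1_d\simeq\C P^{m-1}$ as observed in the introduction; I would cross-check these against the explicit formulas of Proposition~\ref{hmpjkm}.

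There is no substantive obstacle here. The passage from the genuine quotient to the homotopy (Borel) quotient, and the simple-connectivity of the spaces in play that licenses the wedge splittings, were already settled earlier — in the remark following Lemma~\ref{fibrations} and in the Borel-construction arguments within Proposition~\ref{hmpjkm} — so the present step is purely formal: extract the wedge from~\eqref{pushout2} and insert Theorem~\ref{ckm}. If anything could slip it is an off-by-one in the rank $m-k-2$ of the torus factor or in the top sphere $S^{2k+1}$, which is exactly what checking the $k=m-2$ boundary case would catch.
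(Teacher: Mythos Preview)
Your proposal is correct and matches the paper's approach exactly: the paper gives no separate argument for Corollary~\ref{zkkm} (it is marked with a bare \qed), simply noting that it follows from Lemma~\ref{lemzkm} together with Theorem~\ref{ckm}, which is precisely what you do. Your extra bookkeeping (the explicit substitution $(k,m)\mapsto(k-1,m-1)$ and the degenerate-case checks) is sound and in fact surfaces an apparent typo in the introduction's main theorem, where the torus rank should be $m-k-2$ rather than $m-k-1$ to agree with Theorem~\ref{ckm} and Proposition~\ref{hmpjkm}.
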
\qed

\nocite{}

\bibliographystyle{plain}
\bibliography{nummeth}


\end{document}